\theoremstyle{plain}
\newtheorem{theorem}{Theorem}[section]
\newtheorem{remark}[theorem]{Remark}
\newtheorem{conjecture}[theorem]{Conjecture}
\newtheorem{proposition}[theorem]{Proposition}
\begin{document}

\journal{(internal report CC25-12)}

\begin{frontmatter}

\title{Identically vanishing $k$-generalized Fibonacci polynomials}

\author[cc]{S.~R.~Mane}
\ead{srmane001@gmail.com}
\address[cc]{Convergent Computing Inc., P.~O.~Box 561, Shoreham, NY 11786, USA}

\begin{abstract}
The recurrence for the $k$-Fibonacci polynomials
is usually iterated upwards to positive values of $n$ only.
When the recurrence is iterated downwards to $n<0$,
there are indices where the polynomials vanish identically.
This fact does not seem to have been noted in the literature.
We derive the set of such indices.
We establish the connection of our results to the solution of the Skolem problem for the $k$-Fibonacci numbers.
For $k\ge3$ and $n<0$, we show that the degree of the polynomial does not increase monotonically with $|n|$.
The so-called `left-justified $k$-nomial triangle' is extended to treat negative indices.
We derive expressions for the individual polynomial coefficients (the elementary symmetric polynomials of the roots).
We present results for the properties of the polynomials, for both $n>0$ and $n<0$,
including factorization of the polynomials and properties of the roots.
Results are also derived for real roots.
We present new, tighter, bounds on the amplitudes of the nonzero roots.
We derive new combinatorial sums for the polynomial coefficients,
which are more concise and computationally efficient than previously published expressions.
\end{abstract}

\begin{keyword}
  Generalized Fibonacci polynomials
  \sep recurrences
  \sep Skolem problem

%\vskip 0.25in
% MSC2020 codes here, in the form: \MSC code \sep code
\MSC[2020]{
%primary 
11B39  % Fibonacci and Lucas numbers and polynomials and generalizations
\sep 11B37  % recurrences
\sep 05A15  % Exact enumeration problems, generating functions
}

\end{keyword}

\end{frontmatter}

\newpage
\setcounter{equation}{0}
\section{Introduction}\label{sec:intro}
The so-called \textit{$k$-Fibonacci numbers} $F_{n,k}$ are a sequence of integers which satisfy the following recurrence:
\begin{equation}  
\label{eq:rec_Fnk_num}
F_{n,k} = F_{n-1,k} +\dots +F_{n-k,k} \,.
\end{equation}
By convention, the initial values are $F_{n,k}=0$ for $n\in[0,k-2]$ and $F_{n,k}=1$ for $n=k-1$.
The case $k=2$ gives the usual Fibonacci numbers (denoted by $F_n$ below).
The \textit{$k$-Fibonacci polynomials} are a generalization of the $k$-Fibonacci numbers.
They are a family of polynomials $\mathcal{F}_{n,k}(x)$, where $k\ge2$, which satisfy the following recurrence:
\begin{equation}  
\label{eq:rec_Fnk_pos}
\mathcal{F}_{n,k}(x) = x^{k-1}\mathcal{F}_{n-1,k}(x) +x^{k-2}\mathcal{F}_{n-2,k}(x) +\dots +\mathcal{F}_{n-k,k}(x) \,.
\end{equation}
By convention, the initial values are $\mathcal{F}_{n,k}(x)=1$ for $n=1$ and $\mathcal{F}_{n,k}(x)=0$ for $n\in[-(k-2),0]$.
The definition in eq.~\eqref{eq:rec_Fnk_pos} follows Hoggatt and Bicknell \cite{HoggattBicknell_FibGen} (who called them ``R-Bonacci'' numbers).
There are other generalizations of the Fibonacci polynomials, see e.g.~\cite{Ricci,Soykan}.
Arolkar defined the $B$-Tribonacci polynomials via the recurrence (\cite{Arolkar}, eq.~(1))
$t_{n+2} = x^2t_{n+1} +2xt_n + t_{n-1}$.
We do not treat such alternative definitions here.
Most authors iterate the recurrence eq.~\eqref{eq:rec_Fnk_pos}
upwards to positive values of $n$, see e.g.~\cite{Arslan_Uslu,Gupta_etal,HoggattBicknell_FibGen,Koshy,Tasyurdu_Polat}.
All the coefficients in the recurrence are positive, hence $\mathcal{F}_{n,k}(x)$ is not identically zero for $n\ge1$.
It is then easily seen that for $n\ge1$, $\mathcal{F}_{n,k}(x)$ is a polynomial in $x$ of degree $d_{n,k} = (k-1)(n-1)$.
We begin with $\mathcal{F}_1(x)=1$, which is a polynomial of degree zero,
and each upward step in $n$ adds $k-1$ to the degree of the polynomial
because $\mathcal{F}_{n,k}(x)=x^{k-1}\mathcal{F}_{n-1,k}(x)+\dots$.
For example $\mathcal{F}_{2,k}(x)=x^{k-1}$ and $\mathcal{F}_{3,k}(x)=x^{2(k-1)} +\dots$.

However, little attention has been paid if we iterate eq.~\eqref{eq:rec_Fnk_pos} \emph{downwards} to negative values of $n$.
Kuhapatanakul and Sukruan \cite{Kuhapatanakul_Sukruan} studied the Tribonacci numbers for negative indices
and Waddill \cite{Waddill} studied the Tetranacci numbers for both positive and negative indices.
For negative $n$, we rewrite the recurrence in eq.~\eqref{eq:rec_Fnk_pos} as follows:
\begin{equation}  
\label{eq:rec_Fnk_neg}
\mathcal{F}_{n,k}(x) = \mathcal{F}_{n+k,k}(x) -\sum_{j=1}^{k-1} x^j\mathcal{F}_{n+j,k}(x) \,.
\end{equation}
We shall show that, for certain negative indices $n$, the $k$-Fibonacci polynomials \emph{vanish identically}.
This fact does not seem to have been noted in the literature.
There are $k(k-1)/2$ such indices (including the block of $k-1$ initial values).
For $k\ge3$ and $n<0$, we shall also show the degree of the polynomial does \emph{not} increase monotonically with $|n|$.

It will be useful below to employ two parameters, a quotient $q$ and a remainder $r$.
\begin{subequations}
\label{eq:q_r_def}
\begin{align}
\label{eq:q_def}
q_{n,k} &= \begin{cases} \lfloor(n-2)/k\rfloor &\qquad\qquad\qquad (n>0) \,, \\
  \lfloor(|n|+1)/k\rfloor &\qquad\qquad\qquad (n\le0) \,. \end{cases}
\\
\label{eq:r_def}
r_{n,k} &= \begin{cases} (k-1)(n-1)\!\!\!\mod k &\qquad (n>0) \,, \\
  (|n|+1)\!\!\!\mod k &\qquad (n\le0) \,. \end{cases}
\end{align}
\end{subequations}
We shall omit the subscripts $n$ and $k$ unless necessary.
The expression for $r$ for $n>0$ is also valid for $n\le0$, but we avoid negative arguments.
The definition of $q$ is peculiar but not a misprint.
The evidence suggests that the real partition is not ``positive and negative $n$'' but rather $n>\frac12$ and $n<\frac12$,
so $q_{n,k}=\lfloor(n-\frac32)/k\rfloor=\lfloor(2n-3)/(2k)\rfloor$ for $n\ge1$
and $q_{n,k}=\lfloor(|n|+\frac32)/k\rfloor=\lfloor(2|n|+3)/(2k)\rfloor$ for $n\le0$,
which is equivalent to eq.~\eqref{eq:q_def}.
By construction, $kq+r = |n|+1 = 1-n$ for $n\le0$.
For $n>0$, the expression is $k(q+1)-r = n-1$.
Also $q\ge0$ and $r\in[0,k-1]$ for all $n\in\mathbb{Z}$.

The structure of this paper is as follows.
Sec.~\ref{sec:poly_prop} lists basic properties of the $k$-Fibonacci polynomials.
For $n>0$, Hoggatt and Bicknell \cite{HoggattBicknell_FibGen} showed how to obtain the polynomial coefficients
from the so-called \textit{left-justified $k$-nomial triangle}, to be explained below.
We extend the formalism to treat $n<0$ also.
We present more concise computationally efficient sums for the polynomial coefficients,
which do not seem to have been published before.
Sec.~\ref{sec:poly_neg} focuses on the polynomials for $n<0$.
In particular, we quantify the set of the identically vanishing polynomials.
Sec.~\ref{sec:Skolem} establishes the relation of our results to the solution of the Skolem problem for the $k$-Fibonacci numbers.
Sec.~\ref{sec:poly_fac} presents results for the factorization of the polynomials.
The nonzero roots have a $k$-fold rotational symmetry around the origin in the complex plane
(i.e.~if $x$ is a root, then $\omega_k x$ is also a root, where $\omega_k$ is a primitive $k^{th}$ root of unity).
Sec.~\ref{sec:poly_roots} presents results for properties of the polynomial roots,
including bounds on the amplitudes of the nonzero roots and results for real roots.
Sec.~\ref{sec:elem_symm_poly} presents expressions for the elementary symmetric polynomials of the nonzero roots,
in particular the sum and product of the nonzero roots.

%\newpage
\setcounter{equation}{0}
\section{Selected properties of polynomials}\label{sec:poly_prop}
\subsection{Examples of identically vanishing polynomials}\label{sec:zero_poly}
For $k=2$, the recurrence for the Fibonacci polynomials $F_n(x)$ is as follows:
\begin{equation}  
F_n(x) = xF_{n-1}(x) +F_{n-2}(x) \,.
\end{equation}
To iterate downwards to negative values of $n$, express this as follows:
$F_n(x) = -xF_{n+1}(x) +F_{n+2}(x)$.
If we replace $x$ by $-x$, we obtain basically the same recurrence as the original.
Given the initial conditions $F_0(x)=0$ and $F_1(x)=1$, we obtain the well-known result
$F_{-n}(x) = F_n(-x) = (-1)^{n-1}F_n(x)$.
However, this is \emph{not} the case for $k\ge3$.
From \cite{HoggattBicknell_FibGen}, the recurrence for the Tribonacci polynomials $T_n(x)$ is
\begin{equation}  
T_n(x) = x^2T_{n-1}(x) +xT_{n-2}(x) +T_{n-3}(x) \,.
\end{equation}
Flipping the sign of $x$ is not helpful because the coefficient of $x^2$ does not change sign.
The initial values are $T_1(x) = 1$ and $T_0(x)=T_{-1}(x) = 0$.
Then we obtain
$T_{-2}(x) = T_1(x) =1$,
$T_{-3}(x) = -xT_{-2}(x) =-x$ and
$T_{-4}(x) = -xT_{-3}(x) -x^2T_{-2}(x) =x^2 -x^2 =0$.
From \cite{HoggattBicknell_FibGen}, the recurrence for the Quadranacci polynomials $Q_n(x)$ is
\begin{equation}
Q_n(x) = x^3Q_{n-1}(x) +x^2Q_{n-2}(x) +xQ_{n-3}(x) +Q_{n-4}(x) \,.
\end{equation}
The initial values are $Q_1(x) = 1$ and $Q_0(x)=Q_{-1}(x)=Q_{-2}(x) = 0$.
Then we obtain
$Q_{-3}(x) = Q_1(x) =1$,
$Q_{-4}(x) = -xQ_{-3}(x) =-x$ and
$Q_{-5}(x) = -xQ_{-4}(x) -x^2Q_{-3}(x) =x^2 -x^2 =0$.
The pattern is the same for all the higher $k$-Fibonacci polynomials:
iterating downwards from $n=-(k-1)$, the first two are respectively $1$ and $-x$ and the third is identically zero.
There are also other negative indices where the polynomials vanish identically.
Table \ref{tb:Fib_poly_neg} tabulates the Tribonacci, Quadranacci and Pentanacci polynomials for $0 \ge n \ge -23$.
Blanks in the table indicate indices where the polynomials vanish identically.
There are $3$, $6$, and $10$ vanishing polynomials for $k=3$, $4$, and $5$ respectively.
The degree $d_{n,k}$ of the nonvanishing polynomials is also tabulated in Table \ref{tb:Fib_poly_neg}.
Observe that the degree of the polynomials does not increase monotonically with $|n|$.

%\newpage
\subsection{Left-justified $k$-nomial table of polynomial coefficients}\label{sec:table_poly_coeff}
Hoggatt and Bicknell \cite{HoggattBicknell_FibGen} showed how to obtain the
coefficients of the polynomial $\mathcal{F}_{n,k}(x)$
from the elements in the so-called \textit{left-justified $k$-nomial triangle}.
They treated the case $n>0$ only.
See \cite{HoggattBicknell_FibGen} for details.
We explain the concept and extend the formalism to treat $n<0$ also.
To avoid confusion of notation, we employ $n$ and $k$ to index a $k$-Fibonacci polynomial $\mathcal{F}_{n,k}(x)$.
We index the rows and columns in the tables below using $m$ and $j$, respectively.
We expand $(1+x+\dots+x^{k-1})^m$ in a power series in $x$.
The coefficients of the powers of $x$ are arranged in a table, where the row is indexed by $m\in\mathbb{Z}$ and the column $j\ge0$ is zero-indexed.
For $m\ge0$, the table has the shape of a triangle,
which is termed the \textit{left-justified $k$-nomial triangle}.
For $k=2$, it is the left-justified Pascal triangle.
The $m<0$, power series is non-terminating, but that does not matter.
We shall therefore employ the term \textit{left-justified $k$-nomial table} below.

We exhibit the $k$-nomial table for $k=4$, to obtain the Quadranacci polynomials.
We display the numbers in Table \ref{tb:tbl_k4}.
All elements to the left of the table are understood to be zero.
Elements tagged with the same superscript in Table \ref{tb:tbl_k4} are coefficients in the same Quadranacci polynomial.
We list a few Quadranacci polynomials below to demonstrate, for both $n\ge0$ and $n<0$.
We read the table along a diagonal from southeast to northwest.
At each step, we move up a row and shift left by $k$ columns, until we exit the table on the left.
\begin{equation}
\label{eq:Quad_from_table}
\begin{array}{llllllllllll}
  E: & Q_{5}(x) &=& 1\,x^{12} +3\,x^8 +3\,x^4 +1\,x^0  &=& x^{12} +3x^8 +3x^4 +1 \,, \\
  D: & Q_{4}(x) &=& 1\,x^9 +2\,x^5 +1\,x^1  &=& x^9 +2x^5 +x \,, \\
  C: & Q_{3}(x) &=& 1\,x^6 +1\,x^2 &=& x^6 +x^2 \,, \\
  B: & Q_{2}(x) &=& 1\,x^3 &=& x^3 \,, \\
  A: & Q_{1}(x) &=& 1\,x^0 &=& 1 \,, \\
  X: & Q_{0}(x) &=& 0 &=& 0 \,, \\
  Y: & Q_{-1}(x) &=& 0 &=& 0 \,, \\
  Z: & Q_{-2}(x) &=& 0 &=& 0 \,, \\
  a: & Q_{-3}(x) &=& 1\,x^0 &=& 1 \,, \\
  b: & Q_{-4}(x) &=& -1\,x^1 &=& -x \,, \\
  c: & Q_{-5}(x) &=& 0\,x^2 &=& 0 \,, \\
  d: & Q_{-6}(x) &=& 0\,x^3 &=& 0 \,, \\
  e: & Q_{-7}(x) &=& 1\,x^4+1\,x^0 &=& x^4+1 \,, \\
  f: & Q_{-8}(x) &=& -1\,x^5-2\,x &=& -x^5-2x \,, \\
  g: & Q_{-9}(x) &=& 0\,x^6+1\,x^2 &=& x^2 \,, \\
  h: & Q_{-10}(x) &=& 0\,x^7+0\,x^3 &=& 0 \,, \\
  i: & Q_{-11}(x) &=& 1\,x^8+2\,x^4+1\,x^0 &=& x^8+2x^4+1 \,, \\
  j: & Q_{-12}(x) &=& -1\,x^9-4\,x^5-3\,x^1 &=& -x^9-4x^5-3x \,, \\
  k: & Q_{-13}(x) &=& 0\,x^{10}+2\,x^6+3\,x^2 &=& 2x^6+3x^2 \,, \\
  \ell: & Q_{-14}(x) &=& 0\,x^{11}+0\,x^7-1\,x^3 &=& -x^3 \,.
\end{array}
\end{equation}
For $n>0$, it is easily verified that the above expressions are correct.
The entries tagged `$X$' through `$Z$' index the initial conditions $n\in\{0,-1,-2\}$ and the polynomials all vanish: $Q_{0}(x)=Q_{-1}(x)=Q_{-2}(x)=0$.
It is only when we reach indices where $n \le -(k-1)$, i.e.~$n\le-3$ in this case, that we again encounter nonvanishing polynomials.  
For $n\le-(k-1)$, the expressions above agree with the display in Table \ref{tb:Fib_poly_neg}.
Observe how $Q_{-5}(x)$, $Q_{-6}(x)$ and $Q_{-10}(x)$ vanish identically,
and the coefficients of the highest powers in $Q_{-9}(x)$, $Q_{-13}(x)$
and the two highest powers in $Q_{-14}(x)$ are zero,
hence the polynomials have lower degrees.
\begin{remark}
For $n>0$, Hoggatt and Bicknell \cite{HoggattBicknell_FibGen} read the left-justified $k$-nomial table along a diagonal at a $45^\circ$ angle as follows.
Start at row $m=n-1$ and column $j=0$ and at each step, move up a row and shift right by one column.
Such a procedure works only for $n>0$.
Our procedure (read a diagonal from southeast to northwest) works for all $n\in\mathbb{Z}$.
\end{remark}
Note the following, for all $k\ge2$:
\begin{enumerate}
\item
We foliate the left-justified $k$-nomial table with parallel diagonals.
The elements along a diagonal yield the coefficients of the corresponding $k$-Fibonacci polynomial.  
For the polynomial $\mathcal{F}_{n,k}(x)$, the equation for the diagonal is $j = km + 1-n$; it intersects the row indexed by $m=0$ at the column $j=1-n$.
\item
For fixed $k\ge2$ and $n>0$, the coefficient of the highest power of $x$ in $\mathcal{F}_{n,k}(x)$ is encountered at the row indexed by $m=n-1$
and is the rightmost nonzero element in that row, i.e.~the column index is $j=(k-1)(n-1)$.
\item
  The cases $n\in[-(k-2),0]$ are the initial values $\mathcal{F}_{n,k}(x)=0$ and the corresponding table elements are zero.
\item
For fixed $k\ge2$ and $n\le-(k-1)$,
the reading of the diagonal for $\mathcal{F}_{n,k}(x)$ begins at the row indexed by $m=-1$ and the column indexed by $j=|n|+1-k$.
\emph{However, the coefficient at this location may be zero, hence the degree of the polynomial may be lower, and the polynomial may vanish identically.}
\end{enumerate}

%\newpage
\subsection{Combinatorial sums for $k$-nomial table elements}\label{sec:tbl_elem}
Let $C_k(m,j)$ denote the coefficient of $x^j$ in the series expansion of $(1+x+\dots+x^{k-1})^m$ in powers of $x$.
It is computationally inefficient to expand $(1+x+\dots+x^{k-1})^m$ in a multinomial sum, to extract the coefficient of $x^j$.
We propose the following more concise expressions, which do not seem to have been published in the literature.
\begin{proposition}
For fixed $k\ge2$, $m\in\mathbb{Z}$ and $j\ge0$,
let $C_k(m,j)$ denote the coefficient of $x^j$ in the series expansion of $(1+x+\dots+x^{k-1})^m$.
For $m\ge0$, we obtain
\begin{equation}
\label{eq:C_pos}
  C_k(m,j) = \sum_{s=0}^{\lfloor j/k \rfloor} (-1)^s\binom{m}{s}\binom{m+j-ks-1}{m-1} \,.
\end{equation}
For $m\le-1$, we obtain
\begin{equation}
\label{eq:C_neg}
C_k(m,j) = \sum_{s=0}^{\lfloor j/k \rfloor} (-1)^{j-ks} \binom{|m|}{j-ks} \binom{|m|+s-1}{|m|-1} \,.
\end{equation}
A binomial coefficient is zero if its numerator is less than its denominator.  
\end{proposition}
\begin{proof}
We begin with $m\ge0$.
Observe that
\begin{equation}
\begin{split}
  (1+x+\dots+x^{k-1})^m &= \frac{(1-x^k)^m}{(1-x)^m}
  \\
  &= \biggl(\sum_{s=0}^m (-1)^s\binom{m}{s}x^{ks}\biggr) \biggl(\sum_{t=0}^\infty \binom{m+t-1}{m-1} x^t \biggr) \,.
\end{split}
\end{equation}
The coefficient of $x^j$ and is given by summing over tuples $(s,t)$ such that $t = j-ks$.
Since $t\ge0$, we must have $s \le \lfloor j/k \rfloor$. Hence
\begin{equation}
  C_k(m,j) = \sum_{s=0}^{\lfloor j/k \rfloor} (-1)^s\binom{m}{s} \binom{m+j-ks-1}{m-1} \,.
\end{equation}
This proves eq.~\eqref{eq:C_pos}.
Next we treat $m\le-1$.
The derivation is similar to that for eq.~\eqref{eq:C_pos}. For $m<0$, 
\begin{equation}
\begin{split}
  \frac{1}{(1+x+\dots+x^{k-1})^{|m|}} &= \frac{(1-x)^{|m|}}{(1-x^k)^{|m|}}
  \\
  &= \biggl(\sum_{t=0}^{|m|}(-1)^t\binom{|m|}{t}x^t\biggr) \biggl(\sum_{s=0}^\infty \binom{|m|+s-1}{|m|-1}x^{ks}\biggr) \,.
\end{split}
\end{equation}
The coefficient of $x^j$ and is given by summing over tuples $(s,t)$ such that $t = j-ks$.
Since $t\ge0$, we must have $s \le \lfloor j/k \rfloor$. Hence
\begin{equation}
C_k(m,j) = \sum_{s=0}^{\lfloor j/k \rfloor} (-1)^{j-ks} \binom{|m|}{j-ks} \binom{|m|+s-1}{|m|-1} \,.
\end{equation}
This proves eq.~\eqref{eq:C_neg}.
\end{proof}

%\newpage
\subsection{Polynomial coefficients}\label{sec:poly_coeff}
For $n>0$, the polynomial $\mathcal{F}_{n,k}(x)$ is given by
\begin{equation}
\label{eq:poly_pos1}
\mathcal{F}_{n,k}(x) = \sum_{h=0}^{\lfloor(k-1)(n-1)/k\rfloor} C_k(n-h-1, (k-1)(n-1)-hk)\,x^{(n-1)(k-1)-hk} \,.
\end{equation}
For $m\ge0$, observe that $C_k(m,j) = C_k(m, (k-1)m-j)$.
See Table \ref{tb:tbl_k4} for examples.
Hence for $n>0$, we may equivalently write
\begin{equation}
\label{eq:poly_pos}
\mathcal{F}_{n,k}(x) = \sum_{h=0}^{\lfloor(k-1)(n-1)/k\rfloor} C_k(n-h-1, h)\,x^{(n-1)(k-1)-hk} \,.
\end{equation}
The sum in eq.~\eqref{eq:poly_pos} is equivalent to that published by Hoggatt and Bicknell (\cite{HoggattBicknell_FibGen}, eq.~(4.2)).
For $n\le-(k-1)$, the polynomial $\mathcal{F}_{n,k}(x)$ is given by
\begin{equation}
\label{eq:poly_neg}
\mathcal{F}_{n,k}(x) = \sum_{h=0}^{\lfloor(|n|+1-k)/k\rfloor} C_k(-(1+h), |n|+1-k(1+h))\,x^{|n|+1-k(h+1)} \,.
\end{equation}
Some or all of the coefficients in the sum in eq.~\eqref{eq:poly_neg} may be zero.
We shall specify tight bounds for the sum in eq.~\eqref{eq:poly_neg} later,
including the criterion for a polynomial to vanish identically.
We obtain a more useful expression for $C_k(-(1+h),|n|+1-k(1+h))$ in eq.~\eqref{eq:poly_neg} as follows.
Recall $q$ and $r$ in eq.~\eqref{eq:q_r_def} and $|n|+1=kq+r$ for $n\le0$.
Substituting in eq.~\eqref{eq:C_neg} yields $\lfloor (|n|+1-k(1+h))/k \rfloor = \lfloor (k(q-h-1)+r)/k \rfloor = q-h-1$, whence
\begin{equation}
\label{eq:Cnk_neg_qr}    
\begin{split}
  C_k(-(1+h),|n|+1-k(1+h)) &= \sum_{s=0}^{q-h-1} (-1)^{k(q-s-h-1)+r} 
  \binom{1+h}{k(q-h-s-1)+r} \binom{h+s}{s} \,.
\end{split}
\end{equation}
We shall employ eq.~\eqref{eq:Cnk_neg_qr} below.
\begin{remark}\label{rem:high_power_neg}
For fixed $k\ge2$ and $n<0$, the highest power of $x$ in eq.~\eqref{eq:poly_neg} is attained when $h=0$ and is $|n|+1-k$.
Table \ref{tb:Fib_poly_neg} displays examples where the highest power $|n|+1-k$ is attained for some values of $k$ and $n$.
(There are also polynomials whose degree is lower than $|n|+1-k$.)
\end{remark}

%\newpage
%\setcounter{equation}{0}
\subsection{Additional structure of polynomials for $n>0$}\label{sec:poly_pos}
For $n>0$, it is well-known that
(i) the coefficients of all the terms in $\mathcal{F}_{n,k}(x)$ are positive;
(ii) the powers of $x$ have the form $x^{(k-1)(n-1)-hk}$, where $0\le h\le \lfloor(k-1)(n-1)/k\rfloor$;
(iii) the degree of $\mathcal{F}_{n,k}(x)$ is $(k-1)(n-1)$; and
(iv) the lowest exponent is given by the remainder $(k-1)(n-1)\!\!\!\mod k$
(this is $r_{n,k}$, see eq.~\eqref{eq:r_def}).
Let us treat $n\ge3$ below, because $\mathcal{F}_{1,k}(x)=1$ and $\mathcal{F}_{2,k}(x)=x^{k-1}$ are monomials.
The coefficient of the highest term in $\mathcal{F}_{n,k}(x)$ is known to be unity.
Using eq.~\eqref{eq:C_pos}, the coefficient of the second-highest term in eq.~\eqref{eq:poly_pos} is given by $h=1$ and is 
\begin{equation}
\label{eq:Cnk1_pos}
\begin{split}
C_k(n-2,1) &= \sum_{s=0}^{\lfloor 1/k \rfloor} (-1)^s\binom{n-2}{s} \binom{n-ks-2}{n-3} \quad (\Rightarrow\;s=0\;\textrm{only})
\\
&= \binom{n-2}{n-3}
\\
&= n-2 \,.
\end{split}
\end{equation}
The calculation is more complicated for $h\ge2$, but we can give a simple expression for the lowest term.
\begin{proposition}\label{prop:low_pos}
(The lowest term in $\mathcal{F}_{n,k}(x)$ for $n>0$.)
Recall $q$ and $r$ from eq.~\eqref{eq:q_r_def}, dropping the subscripts for brevity.
We again treat $n\ge3$ because $\mathcal{F}_{1,k}(x)=1$ and $\mathcal{F}_{2,k}(x)=x^{k-1}$ are monomials.
For $n\ge3$, the lowest term in $\mathcal{F}_{n,k}(x)$, say $L_{n,k}(x)$, is
\begin{equation}  
\label{eq:low_pos}
L_{n,k} = \binom{q+r}{q}\,x^r = \binom{q+r}{r}\,x^r \,.
\end{equation}
\end{proposition}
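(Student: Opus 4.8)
The plan is to extract the lowest term directly from the Hoggatt--Bicknell sum \eqref{eq:HB_pos} and then to simplify the resulting $k$-nomial coefficient using the concise formula \eqref{eq:C_pos}. First I would observe that in \eqref{eq:HB_pos} the exponent $(k-1)(n-1)-hk$ is strictly decreasing in $h$, so the lowest term is the one with the largest admissible $h$, namely $h_{\max}=\lfloor(k-1)(n-1)/k\rfloor$. By the very definition of $r$ in \eqref{eq:r_def}, the exponent of this term is $(k-1)(n-1)-k\,h_{\max}=r$, which already confirms the power $x^r$ claimed in \eqref{eq:low_pos}.

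The heart of the argument is the index bookkeeping. Using the relation $n-1=k(q+1)-r$ recorded after \eqref{eq:q_r_def} for $n>0$, I would compute
\[
(k-1)(n-1)=(k-1)\bigl[k(q+1)-r\bigr]=k\bigl[(k-1)(q+1)-r\bigr]+r,
\]
so that $h_{\max}=(k-1)(q+1)-r$ and the row index collapses to
\[
n-h_{\max}-1 = k(q+1)-r-\bigl[(k-1)(q+1)-r\bigr] = q+1 .
\]
Hence the coefficient of the lowest term is exactly $C_k\bigl(q+1,\,(k-1)(q+1)-r\bigr)$.

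Next I would invoke the palindromic symmetry of each row, $C_k(m,j)=C_k\bigl(m,(k-1)m-j\bigr)$, which holds because $(1+x+\dots+x^{k-1})^m$ is a self-reciprocal polynomial of degree $(k-1)m$. With $m=q+1$ and $j=(k-1)(q+1)-r$ this reflects the column index to $r$, giving $C_k\bigl(q+1,\,(k-1)(q+1)-r\bigr)=C_k(q+1,r)$. Since $0\le r\le k-1$, in the sum \eqref{eq:C_pos} only the term $s=0$ survives (because $\lfloor r/k\rfloor=0$), and it evaluates to $\binom{q+1}{0}\binom{q+r}{q}=\binom{q+r}{q}$. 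The second form $\binom{q+r}{r}$ is then immediate from the symmetry of the ordinary binomial coefficient, which completes the proof.

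I expect the only genuine obstacle to be the index arithmetic tying $h_{\max}$, $q$, and $r$ together; once $n-h_{\max}-1=q+1$ is in hand, the remaining steps are routine. As a reassuring sanity check (which I would include), there is a purely combinatorial reading of the final step: for $r\le k-1$ the constraint $e_i\le k-1$ in the weak composition $e_1+\dots+e_{q+1}=r$ is never binding, so $C_k(q+1,r)$ simply counts weak compositions of $r$ into $q+1$ parts, which is $\binom{q+r}{q}$ by stars and bars. I would also verify the edge case $n=3$ (where $q=0$, $r=k-2$, giving $L_{3,k}=x^{k-2}$) against the direct expansion $\mathcal{F}_{3,k}(x)=x^{2(k-1)}+x^{k-2}$ to guard against an off-by-one slip in the bookkeeping.
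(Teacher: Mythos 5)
Your proof is correct, but it follows a genuinely different route from the paper's. The paper proves \eqref{eq:low_pos} by induction on $n$: equating the lowest powers of $x$ in the recurrence \eqref{eq:rec_Fnk_pos} yields a recurrence $L_{n,k}(x)=\sum_{i=0}^{r}x^{i}L_{n-k+r-i,k}(x)$ for the lowest term alone, whose contributing terms all have quotient $q'=q-1$; the induction hypothesis then reduces the claim to the hockey-stick identity $\sum_{i=0}^{r}\binom{q-1+i}{q-1}=\binom{q+r}{q}$, with the base case $n\in[1,k]$ checked directly. You instead read the lowest term straight off the closed-form Hoggatt--Bicknell sum \eqref{eq:HB_pos}: since the coefficients $C_k(n-h-1,h)$ are nonnegative, the lowest term is the $h=h_{\max}$ term, and your index bookkeeping ($h_{\max}=(k-1)(q+1)-r$, row index $n-h_{\max}-1=q+1$) is accurate, as is the reduction $C_k(q+1,(k-1)(q+1)-r)=C_k(q+1,r)=\binom{q+r}{q}$ via the palindromic row symmetry $C_k(m,j)=C_k(m,(k-1)m-j)$ and the single surviving $s=0$ term in \eqref{eq:C_pos}. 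Each approach buys something: yours is non-inductive and shorter, leveraging the already-available closed forms \eqref{eq:HB_pos} and \eqref{eq:C_pos} (at the modest cost of importing the standard self-reciprocity of $(1+x+\dots+x^{k-1})^{m}$, which the paper never states but which you justify correctly); the paper's induction is self-contained and, more importantly, is structured so that essentially the same argument transfers to Prop.~\ref{prop:low_neg} for $n<0$, where positivity of coefficients fails and your ``largest admissible $h$'' extraction would not identify the lowest \emph{nonvanishing} term without extra work. One small point worth making explicit in your write-up: the nonvanishing of the $h_{\max}$ coefficient is needed for it to be the genuine lowest term, and this is delivered by your own evaluation $\binom{q+r}{q}\ge 1$.
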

\begin{proof}
We employ an induction proof.
Recall the exponent of the lowest term is $r$.
Fix the value of $k$, then fix a value of $n$ and calculate $q$ and $r$ using eq.~\eqref{eq:q_r_def}.
Recall $n-1 = k(q+1)-r$ for $n>0$.
Equating powers of $x$ in the recurrence eq.~\eqref{eq:rec_Fnk_pos}
yields the following recurrence for the lowest term $L_{n,k}(x)$:
\begin{equation}  
\label{eq:rec_L_pos}
  L_{n,k}(x) = \sum_{i=0}^r x^i L_{n-k+r-i,k}(x) \,.
\end{equation}
The only terms which contribute to the sum have $q^\prime=q-1$,
so we suppose eq.~\eqref{eq:low_pos} is valid for all $n^\prime$ such that $q^\prime < q$ (and all $r^\prime \in[0,k-1]$).
Substitute into eq.~\eqref{eq:rec_L_pos} to obtain a sum $S$. We omit the common factor $x^r$. We obtain 
\begin{equation}  
\begin{split}
S &= \sum_{i=0}^{r} \binom{q-1+i}{q-1}
\;=\; \binom{q+r}{q} \,.
\end{split}  
\end{equation}
We easily verify eq.~\eqref{eq:low_pos} to be true for $n\in[1,k]$, i.e.~$q=0$ and $r\in[0,k-1]$.
The proof follows by induction on $n$, as we step up the values of $q$ and $r$.
\end{proof}

%\newpage
\setcounter{equation}{0}
\section{Polynomials for $n<0$}\label{sec:poly_neg}
\subsection{Identically vanishing polynomials}\label{sec:poly_van}
\begin{proposition}\label{prop:van_indices}
For $k\ge2$ and $n\le0$, the polynomial $\mathcal{F}_{n,k}(x)$ vanishes identically if and only if $q_{n,k}<r_{n,k}$.
This includes the set of initial values $|n|\in[0,k-2]$.
Dropping the subscripts on $q$ and $r$,
recall that $kq+r = 1-n$ for $n\le0$, hence $\mathcal{F}_{n,k}(x) = \mathcal{F}_{1 -(kq+r),k}(x)$.
Then $\mathcal{F}_{n,k}(x)$ vanishes identically for all tuples $(q,r)$ such that
$0 \le q < r \le k-1$.
Such tuples exist if and only if $0 \le q \le k-2$.
The number of such tuples is $k(k-1)/2$, including the initial conditions.
If $|n|\ge k^2-k-1$, the polynomials do not vanish identically.
\end{proposition}
The proof of Prop.~\ref{prop:van_indices} proceeds in two steps.
For $n<0$, let $\mathcal{G}_k(w)$ denote the generating function for the recurrence in eq.~\eqref{eq:rec_Fnk_neg}.
Expanding $\mathcal{G}_k(w)$ in a series in powers of $w$, we show that there are some values of $n$ 
for which a term in $w^{|n|}$ does not appear, i.e., the corresponding polynomial $\mathcal{F}_{n,k}(x)$ vanishes identically.
Next, we show that this is the exhaustive list of the vanishing polynomials.

The generating function is as follows (the overall minus sign is to obtain the correct initial value):
\begin{equation}  
\label{eq:genfcn_Fibpoly_neg}
\begin{split}  
  \mathcal{G}_k(w) &= -\frac{1/w}{1 -x^{k-1}/w -x^{k-2}/w^2 -\dots -x/w^{k-1} -1/w^k}
  \\
  &= \frac{w^{k-1}}{-w^k +x^{k-1}w^{k-1} +x^{k-2}w^{k-2} +\dots +xw +1}
  \\
  &= \frac{w^{k-1}}{(x^kw^k-1)/(xw-1)-w^k}
  \\
  &= \frac{w^{k-1}(1-xw)}{1 -(1+x^k -xw)w^k} \,.
\end{split}  
\end{equation}
Then $\mathcal{F}_{n,k}(x)$ equals the coefficient of $w^{|n|}$ in the Maclaurin series of $\mathcal{G}_k(w)$.
We show that certain powers of $w$ do not appear in that series.
The corresponding polynomials are thus identically zero.
We deduce the following from eq.~\eqref{eq:genfcn_Fibpoly_neg}:
\begin{equation}  
\label{eq:genfcn_Fibpoly_neg_w_pow}
\begin{split}  
  \mathcal{G}_k(w) &= w^{k-1}(1-xw)\sum_{s=0}^\infty w^{ks}(1+x^k -xw)^s
  \\
  &= w^{k-1}(1-xw)
  \\
  &\qquad +w^{2k-1}(1-xw)(1+x^k -xw)
  \\
  &\qquad +w^{3k-1}(1-xw)(1+x^k -xw)^2 
  \\
  &\qquad +w^{4k-1}(1-xw)(1+x^k -xw)^3 +\dots
  \\
  &= w^{k-1} -xw^k
  \\
  &\qquad +w^{2k-1}(1+x^k) -(2x+x^{k+1})w^{2k} +x^2w^{2k+1}
  \\
  &\qquad +w^{3k-1}(1+x^k)^2 +\dots -x^3w^{3k+2}
  \\
  &\qquad +w^{4k-1}(1+x^k)^3 +\dots +x^4w^{4k+3}
  \\
  &\qquad +\dots
\end{split}  
\end{equation}
We deduce several pertinent consequences from eq.~\eqref{eq:genfcn_Fibpoly_neg_w_pow}.
\begin{enumerate}
\item
The lowest power of $w$ which appears is $k-1$.
The initial values are $\mathcal{F}_{n,k}(x)=0$ for $n\in[-(k-2),0]$.
This is a block of $k-1$ consecutive indices.
\item
The two lowest nonzero terms are $w^{k-1}$ and $-xw^k$, hence $\mathcal{F}_{-(k-1),k}(x)=1$ and $\mathcal{F}_{-k,k}(x)=-x$.
This was observed in Table \ref{tb:Fib_poly_neg}.
\item
\emph{For $k\ge3$, there is a gap in the powers of $w$ from $k+1$ through $2k-2$.}
Hence all the polynomials $\mathcal{F}_{n,k}(x)$ vanish identically for $|n|\in[k+1,2k-2]$. This is a block of length $k-2$.
\item
The next nonvanishing term is in $w^{2k-1}$, hence $\mathcal{F}_{-(2k-1),k}=1+x^k$.
\item
The following term is in $w^{2k}$, hence $\mathcal{F}_{-2k,k}=-(2x+x^{k+1})=-x(x^k+2)$.
\item
The following terms are in $w^{2k+1}$ and $w^{3k-1}$. If $k=2$ the exponents are equal, else $3k-1>2k+1$.
\item
Hence for $k\ge3$, the term is given by $w^{2k+1}$ alone, hence $\mathcal{F}_{(-2k-1),k}=x^2$.
\item
Also for $k\ge3$, the next nonvanishing term is in $w^{3k-1}$, hence $\mathcal{F}_{-(3k-1),k}=(1+x^k)^2$.
\item
  By the same reasoning, there is a gap in the powers of $w$ from $2(k+1)$ through $3k-2$.
  All the polynomials $\mathcal{F}_{n,k}(x)$ vanish identically for $|n|\in[2k+2,3k-2]$. This is a block of length $k-3$.
\item
  Proceeding in this way, one sees that a block of vanishing polynomials lies in the interval
  $|n|\in[s(k+1),(s+1)k-2]$. This is a block of length $k-s-1$. 
  The last block has length $1$, hence $s\in[0,k-2]$.
\item
  The total number of such identically vanishing polynomials is $(k-1) +(k-2) +\dots +1 = k(k-1)/2$.
\end{enumerate}
For $n\in[-(k-2),0]$, the absent indices are given by $q_{n,k}=0$ and $r_{n,k}\in[1,k-1]$.
For $n\in[-(2k-2),-(k-1)]$, the absent indices are given by $q_{n,k}=1$ and $r_{n,k}\in[2,k-1]$, etc.
That is to say, the set of absent indices is given by $q_{n,k}<r_{n,k}$.
However, there may be other values of $|n|$ for which the terms sum to zero.
We next show that this is not so.

\begin{proposition}\label{prop:low_neg}
(The lowest term in $\mathcal{F}_{n,k}(x)$ for $n<0$.)
Again recall $q$ and $r$ in eq.~\eqref{eq:q_r_def}.
For $n<0$, the expression for the lowest term $L_{n,k}(x)$ in the polynomial $\mathcal{F}_{n,k}(x)$ is
\begin{equation}  
\label{eq:low_neg}
  L_{n,k}(x) = (-1)^r \binom{q}{r}\,x^r \,.
\end{equation}
The binomial coefficient vanishes if $q<r$.
%Conversely, the polynomial $\mathcal{F}_{n,k}(x)$ does not vanish identically if $q_{n,k}\ge r_{n,k}$ because it contains at least one term.
\end{proposition}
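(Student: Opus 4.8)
The plan is to adapt the induction used for Prop.~\ref{prop:low_pos}, now iterating \emph{downward} through eq.~\eqref{eq:rec_Fnk_neg} (equivalently, upward in $|n|$) and retaining only the lowest-order term. Writing $|n|+1 = kq+r$ with $0\le r\le k-1$ as in eq.~\eqref{eq:q_r_def}, the first step is to decide which terms on the right-hand side of eq.~\eqref{eq:rec_Fnk_neg} contribute to the coefficient of $x^r$. Using $|n+j|+1 = |n|+1-j$ for the relevant non-positive indices, one finds that $\mathcal{F}_{n+k,k}$ has remainder $r$ and quotient $q-1$, that $x^j\mathcal{F}_{n+j,k}$ for $j=1,\dots,r$ starts at $x^{j+(r-j)}=x^r$ (remainder $r-j$, quotient $q$), whereas for $j=r+1,\dots,k-1$ the remainder wraps up to $k+r-j$ and the term starts at $x^{k+r}$. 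Hence only the term $\mathcal{F}_{n+k,k}$ and the first $r$ terms of the sum reach $x^r$, and since every surviving contribution has exponent $\ge r$, no power below $x^r$ is produced, so $x^r$ is indeed the lowest exponent. I would carry an inductive hypothesis asserting both this minimal-exponent property and the coefficient formula for all smaller $|n|$.

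Substituting $L_{m,k}=(-1)^{r_m}\binom{q_m}{r_m}x^{r_m}$ into the contributing terms gives the coefficient of $x^r$ as
\[
(-1)^r\binom{q-1}{r}-\sum_{j=1}^{r}(-1)^{r-j}\binom{q}{r-j}.
\]
Reindexing by $m=r-j$ turns the sum into $\sum_{m=0}^{r-1}(-1)^m\binom{q}{m}$, so the inductive step reduces to the identity
\[
(-1)^r\binom{q-1}{r}-\sum_{m=0}^{r-1}(-1)^m\binom{q}{m}=(-1)^r\binom{q}{r}.
\]
This follows at once from the standard alternating partial sum $\sum_{m=0}^{r-1}(-1)^m\binom{q}{m}=(-1)^{r-1}\binom{q-1}{r-1}$ together with Pascal's rule $\binom{q-1}{r}+\binom{q-1}{r-1}=\binom{q}{r}$. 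A convenient feature here is that the formula returns $0$ whenever a sub-polynomial $\mathcal{F}_{n+j,k}$ vanishes identically (its quotient is then smaller than its remainder), so those indices need no separate treatment and are absorbed automatically.

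For the base I would check the initial block $n=0,-1,\dots,-(k-2)$, where $q=0$ and $r=1,\dots,k-1$ give $(-1)^r\binom{0}{r}x^r=0$ in agreement with the prescribed zeros, and then $\mathcal{F}_{-(k-1),k}=1$ (case $(q,r)=(1,0)$); the index $n=-(k-1)$ needs a word of care because the term $\mathcal{F}_{n+k,k}=\mathcal{F}_{1,k}=1$ reaches into the positive regime and must be supplied as a known initial value rather than through the negative-index formula, after which the induction runs for $n\le -k$ (where $q\ge1$, so $\binom{q-1}{r}$ is meaningful). The converse claim is then immediate: $\binom{q}{r}=0$ exactly when $q<r$ and $\binom{q}{r}>0$ when $q\ge r$, so for $q\ge r$ the polynomial carries a genuine nonzero lowest term $(-1)^r\binom{q}{r}x^r$ and cannot vanish identically; combined with the generating-function argument preceding the proposition, this establishes that $\mathcal{F}_{n,k}(x)$ vanishes identically precisely when $q<r$, completing Prop.~\ref{prop:van_indices}.

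The step I expect to be the main obstacle is the bookkeeping of the first paragraph: pinning down the cutoff at $j=r$, where the remainder of $\mathcal{F}_{n+j,k}$ jumps from $r-j$ to $k+r-j$ and its lowest exponent leaps from $r$ to $k+r$, and confirming that it is this cutoff, not any cancellation, that limits the contributing terms. Once the case split and the accompanying $(q,r)$-reindexing are correct, the remaining algebra collapses to the single binomial identity above.
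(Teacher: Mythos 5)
Your proposal is correct and takes essentially the same approach as the paper: the identical induction on $|n|$ (stepping through $(q,r)$) via the lowest-term recurrence $L_{n,k}(x)=L_{n+k,k}(x)-\sum_{i=1}^{r}x^{i}L_{n+i,k}(x)$, closed by the same binomial computation $(-1)^r\binom{q-1}{r}+(-1)^r\binom{q-1}{r-1}=(-1)^r\binom{q}{r}$. Your explicit bookkeeping of the cutoff at $j=r$ (remainders wrapping to $k+r-j$ so those terms start at $x^{k+r}$) and your flagging of the boundary case $n=-(k-1)$, where $\mathcal{F}_{n+k,k}=\mathcal{F}_{1,k}=1$ must be taken as an initial value, are simply spelled-out versions of steps the paper treats tersely in its statement of the recurrence and its verification of the base interval $|n|\in[k-1,2k-2]$.
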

\begin{proof}
We employ an induction proof.
First fix the value of $k$.
%Fix a value of $n$ and let the corresponding values be $q$ and $r$.
Then fix a value of $n$ and calculate $q$ and $r$ using eq.~\eqref{eq:q_r_def}.
Recall $|n|+1 = kq+r$.
Equating powers of $x$ in the recurrence eq.~\eqref{eq:rec_Fnk_neg}
yields the following recurrence for the lowest term $L_{n,k}(x)$:
\begin{equation}  
\label{eq:rec_L_neg}
  L_{n,k}(x) = L_{n+k,k}(x) -\sum_{i=1}^r x^i L_{n+i,k}(x) \,.
\end{equation}
We suppose eq.~\eqref{eq:low_neg} is valid for all $0 < |n^\prime| < |n|$,
i.e.~tuples $(q^\prime,r^\prime)$, where the following two conditions are satisfied.
First, $q^\prime < q$ (and all $r^\prime \in[0,k-1]$).
Next, $q^\prime = q$ and $r^\prime \in[0,r-1]$.
%(i) $q^\prime < q$ (and all $r^\prime \in[0,k-1]$) and (ii) $q^\prime = q$ and $r^\prime \in[0,r-1]$.
The term $L_{n+k,k}(x)$ has $q^\prime=q-1$ and $r^\prime=r$.
The other terms have $q^\prime=q$ and $r^\prime < r$.
Substitute into eq.~\eqref{eq:rec_L_neg} to obtain a sum $S^\prime$. We omit the common factor $x^r$, whence
\begin{equation}  
\begin{split}  
  S^\prime &= (-1)^r\binom{q-1}{r} -\sum_{i=1}^r (-1)^{r-i}\binom{q}{r-i}
  \\
  &= (-1)^r\binom{q-1}{r} +(-1)^r\binom{q-1}{r-1}
  \\
  &= (-1)^r\binom{q}{r} \,.
\end{split}  
\end{equation}
The result is true by definition for $|n|\in[0,k-2]$ because that is the set of the initial values.
We easily verify eq.~\eqref{eq:low_neg} to be true for the nontrivial interval $|n|\in[k-1,2k-2]$, i.e.~$q=1$ and $r\in[0,k-1]$.
The proof follows by induction on $|n|$, as we step up the values of $q$ and $r$.
\end{proof}

\begin{proof}\label{proof:proof_van_indices}
(Completion of proof of Prop.~\ref{prop:van_indices}.)
Using the generating function in eq.~\eqref{eq:genfcn_Fibpoly_neg}, we showed that 
$\mathcal{F}_{n,k}(x)$ vanishes identically if $q_{n,k} < r_{n,k}$.
Prop.~\ref{prop:low_neg} showed that
$\mathcal{F}_{n,k}(x)$ is nonvanishing if $q_{n,k} \ge r_{n,k}$.
This establishes the ``if and only if'' claim.
The polynomials do not vanish identically for any $n<0$ such that $q_{n,k}\ge k-1$.
It is easily verified that this means $|n| \ge k^2-k-1$.
\end{proof}
\begin{remark}
We actually proved additional properties of the polynomials not stated in Prop.~\ref{prop:van_indices}.
\begin{enumerate}
\item
  From the enumerated list following eq.~\eqref{eq:genfcn_Fibpoly_neg_w_pow},
  the first nonvanishing polynomial (after a previous vanishing block ends) is $\mathcal{F}_{-(sk-1),k}(x)=(1+x^k)^{s-1}$, for $s\in[1,k-2]$.
\item
  Also from the same enumerated list,
  the last nonvanishing polynomial (before the next vanishing block begins) is $\mathcal{F}_{1-s(k+1),k}(x)=(-1)^s x^s$, for $s\in[0,k-2]$.
  For $s=0$, this is the initial value $\mathcal{F}_{1,k}(x)=1$.
\item
  Note that eq.~\eqref{eq:low_neg} presented the complete expression for the lowest term in a nonvanishing polynomial $\mathcal{F}_{n,k}(x)$ for $n<0$.  
\item
  Also, eq.~\eqref{eq:low_pos} presented the complete expression for the lowest term in the polynomial $\mathcal{F}_{n,k}(x)$ for $n>0$
  (although this is not relevant to the proof of Prop.~\ref{prop:van_indices}).
\end{enumerate}
\end{remark}

%\newpage
\subsection{Vanishing polynomial coefficients}
As opposed to polynomials which vanish identically,
for $k\ge3$ and $n\le0$ there are also polynomials which are not identically zero but some, though not all, of the coefficients are zero.
This also does not happen for $n>0$.
An example is the Tribonacci polynomial $T_{-14}(x) = 1 +3x^6 +4x^9 +x^{12}$.
The term in $x^3$ is absent.
For $(n,k)=(-66,5)$, the Pentanacci polynomial has \emph{two} terms whose coefficients are zero:
$P_{-66}(x) = 78 +3300x^{15} +11880x^{20} +\dots$.
The terms in $x^5$ and $x^{10}$ are absent.
This is the only instance found to date where there is more than one vanishing coefficient, in a polynomial which is not identically zero.  
Such ``vanishing coefficients'' have been found for all tested odd values of $k$ (there are no examples for even $k$).
There appears to be no simple pattern to determine the vanishing coefficients.

%\newpage
\subsection{Multinomial sum for negative indices}\label{sec:sum_neg}
Recall the generating function in eq.~\eqref{eq:genfcn_Fibpoly_neg}.
Let us derive a multinomial sum for the polynomial $\mathcal{F}_{n,k}(x)$ for $n<0$.
We process $\mathcal{G}_k(w)$ in eq.~\eqref{eq:genfcn_Fibpoly_neg} as follows:
\begin{equation}  
\begin{split}  
  \mathcal{G}_k(w) &= \frac{w^{k-1}}{1 +xw +\dots +x^{k-1}w^{k-1} -w^k}
  \\
  &= w^{k-1}\sum_{s=0}^\infty (-1)^s(xw +\dots +x^{k-1}w^{k-1} -w^k)^s
  \\
  &= \sum_{s=0}^\infty \sum_{j_1+j_2+\dots+j_k=s} (-1)^{s-j_k}\frac{s!}{j_1!j_2!\dots j_k!}
  \,w^{k-1+\sum_{i=1}^k ij_i} \,x^{\sum_{i=1}^{k-1} ij_i} \,.
\end{split}  
\end{equation}
Then $\mathcal{F}_{n,k}(x)$ equals the coefficient of $w^{|n|}$.
Set $|n| = k-1+\sum_{i=1}^k ij_i$, then $\sum_{i=1}^{k-1} ij_i = |n|+1-k(1+j_k)$.
Also define $J = \sum_{i=1}^k j_i$. We obtain
\begin{equation}  
\label{eq:multsum_neg}    
  \mathcal{F}_{n,k}(x) = \sum_{j_1+2j_2+\dots +kj_k=|n|+1-k} (-1)^{J-j_k}\frac{J!}{j_1!j_2!\dots j_k!} \,x^{|n|+1-k(1+j_k)} \,.
\end{equation}
Curiously, for fixed $k$ and $n$, the exponent of $x$ depends only the \emph{last} variable $j_k$.
Kuhapatanakul and Sukruan derived an equivalent sum for the Tribonacci numbers with negative indices (\cite{Kuhapatanakul_Sukruan}, eq.~(6)),
but see also the unnumbered equation before their eq.~(6), setting $a=x^2$, $b=x$, and $c=1$ yields the Tribonacci polynomials.
They did not remark that the sum is identically zero for some values of $n$.

%\newpage
\subsection{Additional structure of polynomials for $n<0$}\label{sec:add_structpoly}
\begin{proposition}\label{prop:deg}
For $k\ge2$ and $n\le0$, the degree $d_{n,k}$ of the polynomial $\mathcal{F}_{n,k}(x)$ is 
\begin{equation}
\label{eq:deg_neg}    
d_{n,k} = \begin{cases} |n|+1-k &\qquad (r_{n,k}=0)\,, \\ |n|+1-kr_{n,k} &\qquad (r_{n,k}\in[1,k-1])\,.  \end{cases} 
\end{equation}
The polynomial vanishes identically if $d_{n,k}<0$.
The highest term in a nonvanishing polynomial, say $H_{n,k}(x)$, is %as follows:
\begin{equation}
\label{eq:high_term_neg}    
H_{n,k}(x) = \begin{cases} x^{|n|+1-k} &\qquad (r_{n,k}=0)\,,\phantom{\biggl|}
  \\ \displaystyle
  (-1)^{r_{n,k}} \binom{q_{n,k}-1}{r_{n,k}-1}x^{|n|+1-kr_{n,k}} &\qquad (r_{n,k}\in[1,k-1])\,. \end{cases} 
\end{equation}
\end{proposition}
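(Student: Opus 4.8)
The plan is to read off both the degree and the leading coefficient of $\mathcal{F}_{n,k}(x)$ directly from the closed-form triangle expansion eq.~\eqref{eq:Fnk_tri_neg}, with the coefficients supplied by eq.~\eqref{eq:Cnk_neg_qr}. The key structural fact, already flagged in Remark~\ref{rem:high_power_neg}, is that the exponent of $x$ in the $j_k$-th term of eq.~\eqref{eq:Fnk_tri_neg} is $|n|+1-k(1+j_k)$, a strictly decreasing function of $j_k$. Consequently the degree of $\mathcal{F}_{n,k}(x)$ equals $|n|+1-k(1+j_k^{\ast})$, where $j_k^{\ast}$ is the \emph{smallest} $j_k$ for which $C_k(-(1+j_k),|n|+1-k(1+j_k))$ is nonzero, and $H_{n,k}(x)$ is that coefficient times the corresponding power of $x$. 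The task therefore reduces to locating $j_k^{\ast}$ and evaluating the coefficient there.

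First I would analyse the inner sum of eq.~\eqref{eq:Cnk_neg_qr}. The factor $\binom{1+j_k}{k(q-j_k-s-1)+r}$ vanishes unless its lower index lies in $[0,1+j_k]$. As $s$ decreases from its maximal value $q-j_k-1$, this lower index takes the values $r,\,r+k,\,r+2k,\dots$, so its minimum over the sum is $r$, attained at $s=q-j_k-1$. Because $r\in[0,k-1]$ and $k\ge2$, whenever $j_k\le r-1$ (so that $1+j_k\le r$) every value $r+k,r+2k,\dots$ already exceeds $1+j_k$, and so the only term that can survive is the one at $s=q-j_k-1$. Evaluating it gives $(-1)^r\binom{1+j_k}{r}\binom{q-1}{q-j_k-1}$, which itself vanishes precisely when $r>1+j_k$, i.e.\ when $j_k<r-1$.

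The two cases of eq.~\eqref{eq:high_term_neg} now follow. If $r=0$ the smallest admissible index is $j_k^{\ast}=0$, where the lone surviving term is $\binom{1}{0}\binom{q-1}{q-1}=1$; the degree is $|n|+1-k$ and $H_{n,k}(x)=x^{|n|+1-k}$. If $r\in[1,k-1]$, the coefficient is forced to zero for every $j_k<r-1$ and the first nonzero value occurs at $j_k^{\ast}=r-1$, where $1+j_k^{\ast}=r$ and the surviving term is $(-1)^r\binom{r}{r}\binom{q-1}{q-r}=(-1)^r\binom{q-1}{r-1}$; hence the degree is $|n|+1-kr$ and $H_{n,k}(x)=(-1)^r\binom{q-1}{r-1}x^{|n|+1-kr}$. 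For the vanishing claim I would rewrite $|n|+1=kq+r$, so that the degree is $k(q-1)$ when $r=0$ (nonnegative, since $r=0$ forces $q\ge1$) and $kq-(k-1)r$ when $r\ge1$. The latter equals $r$ at $q=r$ and decreases as $q$ decreases, so it is $\ge1$ exactly when $q\ge r$ and negative exactly when $q<r$; this is precisely the criterion of Prop.~\ref{prop:van_indices}, and the binomial $\binom{q-1}{r-1}$ in $H_{n,k}$ vanishes in the same range, so ``$d_{n,k}<0$'' and ``identically zero'' coincide.

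Finally, for non-monotonicity I would exhibit two consecutive nonvanishing indices whose degrees decrease. Take $|n|=kq$ and $|n|=kq+1$ with $q\ge2$; these carry $(q,r)=(q,1)$ and $(q,2)$, both nonvanishing since $q\ge r$, with degrees $kq-(k-1)$ and $kq-2(k-1)$. Thus the degree drops by $k-1\ge2$ as $|n|$ increases by one whenever $k\ge3$ (the entries $Q_{-13}(x)$ and $Q_{-14}(x)$ in eq.~\eqref{eq:Quad_from_table} display exactly this). The main obstacle is the bounding argument of the second paragraph: showing that, in the relevant range $j_k\le r-1$, the inner binomial $\binom{1+j_k}{k(q-j_k-s-1)+r}$ kills all but one term is what simultaneously pins down the location $j_k^{\ast}$ of the leading power and its exact coefficient. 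Once that single-term collapse is established, the degree formula, the explicit $H_{n,k}$, and the vanishing equivalence are all routine bookkeeping.
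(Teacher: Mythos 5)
Your proposal is correct and takes essentially the same route as the paper's own proof: both extract the leading term from eq.~\eqref{eq:Fnk_tri_neg} by minimizing $j_k$ in eq.~\eqref{eq:Cnk_neg_qr}, argue that the binomial $\binom{1+j_k}{k(q-j_k-s-1)+r}$ forces the $s$-sum to collapse to its single top term $s=q-j_k-1$, and evaluate it to obtain $j_k^\ast=0$ with coefficient $1$ when $r=0$ and $j_k^\ast=r-1$ with coefficient $(-1)^r\binom{q-1}{r-1}$ when $r\in[1,k-1]$. Your explicit verifications that $d_{n,k}<0$ reproduces the $q<r$ criterion of Prop.~\ref{prop:van_indices} and that consecutive indices exhibit a degree drop of $k-1$ for $k\ge3$ are welcome details the paper leaves implicit, but they do not alter the method.
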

\begin{proof}
It is easily seen that eq.~\eqref{eq:deg_neg} yields $d_{n,k}<0$ for $n\in[-(k-2),0]$ (the initial conditions).
Hence we treat $|n|\ge k-1$ below.
We prove eq.~\eqref{eq:deg_neg} by deriving the highest terms in eq.~\eqref{eq:high_term_neg} first, then deduce the degree.  
For fixed $k$ and $n$, the highest power of $x$ is given by minimizing the value of $h$.
Dropping the subscripts on $q$ and $r$, recall $kq+r=|n|+1$ and $q\ge1$ for $|n|\ge k-1$.
First suppose $r=0$, then eq.~\eqref{eq:Cnk_neg_qr} yields the following:
\begin{equation}
\begin{split}
  C_k(-(1+h),|n|+1-k(1+h)) &= \sum_{s=0}^{q-h-1} (-1)^{k(q-s-h-1)} %\;\times
  %\\
  %&\qquad\quad
  \binom{1+h}{k(q-h-s-1)} \binom{h+s}{s} \,.
\end{split}
\end{equation}
The highest power of $x$ is obtained by minimizing $h$.
%Try $h=0$, then we obtain the following:
Set $h=0$, then we obtain 
\begin{equation}
\label{eq:Ck_high0}  
\begin{split}
C_k(-1,|n|+1-k) &= \sum_{s=0}^{q-1} (-1)^{k(q-s-1)} \binom{1}{k(q-s-1)} \binom{s}{s} \,.
\end{split}
\end{equation}
A solution exists only if $q-s-1=0$, i.e.~$s=q-1$, else the first binomial coefficient vanishes.
Then only the last term in the sum in eq.~\eqref{eq:Ck_high0} contributes, whence
\begin{equation}
C_k(-1,|n|+1-k) = \binom{1}{0} \binom{q-1}{q-1} = 1 \,.
\end{equation}
The corresponding power of $x$ is $x^{|n|+1-k}$.
This proves the case $r=0$ in eq.~\eqref{eq:high_term_neg}.
Next suppose $1 \le r \le k-1$.
Observe in eq.~\eqref{eq:Cnk_neg_qr} that if $k(q-h-s-1)<0$, then
$k(q-h-s-1)+r \le -k+r < 0$, so the first binomial coefficient will vanish.
Hence we must have $k(q-h-s-1)\ge0$.
Then a solution cannot exist if $1+h<r$, because the first binominal coefficient will vanish (numerator less than denominator).
Hence the smallest allowed value of $h$ is $h=r-1$.
Then we obtain %the following:
\begin{equation}
\label{eq:Ck_high1}
C_k(-r,|n|+1-kr) = \sum_{s=0}^{q-r} (-1)^{k(q-s-r)+r} \binom{r}{k(q-r-s)+r} \binom{s+r-1}{s} \,.
\end{equation}
If $q-r<0$, the polynomial vanishes identically.
If $q-r\ge0$, we must have $q-r-s=0$, i.e.~$s=q-r$, else the first binomial coefficient in eq.~\eqref{eq:Ck_high1} will vanish.
Then only the last term in the sum in eq.~\eqref{eq:Ck_high1} contributes, whence
\begin{equation}
C_k(-r,|n|+1-kr) = (-1)^r \binom{r}{r} \binom{q-1}{q-r} = (-1)^r \binom{q-1}{r-1} \,.
\end{equation}
The corresponding power of $x$ is $x^{|n|+1-kr}$.
This proves the case $r>0$ in eq.~\eqref{eq:high_term_neg}.
The corresponding powers of $x$, for all $r\in[0,k-1]$, yield the degree $d_{n,k}$ in eq.~\eqref{eq:deg_neg}.
\end{proof}
\begin{remark}
From eq.~\ref{eq:deg_neg}, the degree does not increase monotonically with $|n|$ for $n<0$.
Recall from eq.~\eqref{eq:multsum_neg} that the highest attainable power of $x$ is $|n|+1-k$.
From eq.~\eqref{eq:high_term_neg}, that value is attained when $r_{n,k}=0$ or $1$.
\end{remark}
\begin{remark}
For fixed $k$, if the value of $n$ decreases by $k$ (i.e.~$|n|$ increases by $k$), the degree of the polynomial increases by $k$.
The degree can be visualized as a set of parallel lines with slope $1$, indexed by $r_{n,k}$.
Figure \ref{fig:deg} displays a plot of the degree $d_{n,k}$ of a polynomial for $k=5$ and $n>0$ (dotdash line) and $n<0$ (solid line and circles).
Recall for $n>0$ the degree is $d_{n,k}=(k-1)(n-1)$, hence the plot is a straight line with slope $k-1$.
For $k\gg2$ and $|n|\gg1$, the degree is (much) higher for positive $n$.
\end{remark}

\begin{proposition}\label{prop:sec_highest_neg}
For $k\ge2$ and $n\le0$, the second highest term, say $S_{n,k}(x)$, in a nonvanishing polynomial which is also not a monomial, is as follows:
\begin{equation}
\label{eq:sec_term_neg}    
S_{n,k}(x) = \begin{cases}
(2q_{n,k}-3)x^{|n|+1-2k} &\quad (r_{n,k}=0,\, k=2)\,,\phantom{\biggl|} \\
(q_{n,k}-1)x^{|n|+1-2k} &\quad (r_{n,k}=0,\, k\ge3)\,,\phantom{\biggl|} 
  \\ \displaystyle (-1)^{r_{n,k}} (r_{n,k}+1)\binom{q_{n,k}-1}{r_{n,k}}x^{|n|+1-k(r_{n,k}+1)} &\quad (r_{n,k}\in[1,k-1])\,. \end{cases} 
\end{equation}
To avoid a monomial, we require $|n|\ge 2k-1$ for $r_{n,k}=0$ and $|n|\ge k(r+1)-1$ for $r_{n,k}\in[1,k-1]$.
\end{proposition}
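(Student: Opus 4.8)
The plan is to mirror the proof of Prop.~\ref{prop:deg}, exploiting the fact recorded in eq.~\eqref{eq:multsum_neg} that the exponent of $x$ depends only on the last index $j_k$: each admissible $j_k$ contributes the single power $x^{|n|+1-k(1+j_k)}$, and raising $j_k$ by one lowers the exponent by exactly $k$. Hence ordering the terms of $\mathcal{F}_{n,k}(x)$ by descending degree is identical to ordering by ascending $j_k$. In Prop.~\ref{prop:deg} the highest nonzero term came from the smallest admissible value, $j_k=0$ when $r_{n,k}=0$ and $j_k=r-1$ when $r_{n,k}\in[1,k-1]$. The second highest term must therefore arise from the next value, $j_k=1$ (for $r=0$) and $j_k=r$ (for $r\in[1,k-1]$), provided its coefficient does not vanish. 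First I would record these candidates and their exponents $|n|+1-2k$ and $|n|+1-k(r+1)$, which already match the powers asserted in eq.~\eqref{eq:sec_term_neg}.

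Next I would evaluate the coefficient $C_k(-(1+j_k),|n|+1-k(1+j_k))$ at each relevant $j_k$ by substituting into the closed form eq.~\eqref{eq:Cnk_neg_qr}, exactly as was done for the highest term. The decisive observation is that the first binomial factor $\binom{1+j_k}{k(q-j_k-s-1)+r}$ sharply restricts the surviving $s$. For $r\in[1,k-1]$ with $j_k=r$, the denominator $k(q-r-s-1)+r$ must lie between $0$ and $r+1$; since $k\ge2$ and $0\le r\le k-1$ this forces $q-r-s-1=0$, so only $s=q-r-1$ survives, yielding $(-1)^r(r+1)\binom{q-1}{r}$ after the simplifications $\binom{r+1}{r}=r+1$ and $\binom{q-1}{q-r-1}=\binom{q-1}{r}$. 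For $r=0$ with $j_k=1$ the denominator is $k(q-s-2)$ and must lie in $\{0,1,2\}$; here the case split of eq.~\eqref{eq:sec_term_neg} emerges naturally, because for $k\ge3$ only $s=q-2$ qualifies (giving $q-1$), whereas for $k=2$ the value $s=q-3$ also qualifies through $\binom{2}{2}$, contributing an extra $q-2$ and producing $2q-3$.

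Finally I would confirm that the stated nonmonomiality hypotheses are precisely what guarantee these coefficients are nonzero and that the second power genuinely occurs. The requirement $|n|\ge 2k-1$ is equivalent to $q\ge 2$ (so $q-1\ge1$ and $2q-3\ge1$), while $|n|\ge k(r+1)-1$ is equivalent to $q\ge r+1$ (so $q-1\ge r$ and $\binom{q-1}{r}\ge1$); under the complementary cases $q=1$ (for $r=0$) or $q=r$ (for $r\in[1,k-1]$) the polynomial collapses to the single lowest term of eq.~\eqref{eq:low_neg}, i.e.\ a monomial, consistent with the hypotheses. Once the surviving $s$ is pinned down, the remaining bookkeeping is routine.

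The step I expect to be the main obstacle is ruling out a spurious cancellation at the second power: a priori the coefficient at the second value of $j_k$ could itself vanish---the phenomenon of vanishing interior coefficients illustrated earlier by $T_{-14}(x)$ and by $Q_{-9}(x),Q_{-13}(x),Q_{-14}(x)$---in which case the genuine second highest term would sit at a still lower power. The delicate point is therefore to verify that the lone surviving term in the $s$-sum never cancels under the stated hypotheses. This is settled by the explicit evaluations above: each reduces to a sign times manifestly positive binomial coefficients, so no interior cancellation can occur at the top two powers, and the vanishing-coefficient pathology is confined to strictly lower-order terms.
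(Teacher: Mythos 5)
Your proposal is correct and takes essentially the same route as the paper's own proof: select the next admissible value of $j_k$ ($j_k=1$ for $r_{n,k}=0$, $j_k=r$ for $r_{n,k}\in[1,k-1]$), evaluate the coefficient via eq.~\eqref{eq:Cnk_neg_qr}, and observe that the first binomial factor kills all but the last term $s$ in the sum (with the extra surviving term $s=q-3$ through $\binom{2}{2}$ exactly when $k=2$, producing $2q-3$). Your additional checks --- the equivalences $|n|\ge 2k-1\iff q\ge2$ and $|n|\ge k(r+1)-1\iff q\ge r+1$, the monomial collapse in the complementary cases, and the explicit nonvanishing of the surviving coefficient ruling out the vanishing-coefficient pathology at the top two powers --- merely make explicit what the paper leaves implicit.
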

\begin{proof}
This is a followup of eq.~\eqref{eq:high_term_neg} and the derivation follows the same reasoning.
We again drop the subscripts on $q$ and $r$.
First suppose $r=0$.
For the highest power, we set $h=0$, hence for the second highest power we set $h=1$.
Then eq.~\eqref{eq:Cnk_neg_qr} yields the following:
\begin{equation}
\label{eq:Ck_low0}
\begin{split}
  C_k(-2,|n|+1-2k) &= \sum_{s=0}^{q-2} (-1)^{k(q-s-2)} \binom{2}{k(q-s-2)} \binom{s+1}{s} \,.
\end{split}
\end{equation}
A solution exists only if $k(q-s-2)\le2$, else the first binomial coefficient in eq.~\eqref{eq:Ck_low0} vanishes.
If $k\ge3$, we must have $s=q-2$, whence $k(q-s-2)=0$.
Then only the last term in the sum in eq.~\eqref{eq:Ck_low0} contributes and we obtain %the following:
\begin{equation}
C_k(-2,|n|+1-2k) = \binom{2}{0} \binom{q-1}{q-2} = q-1 \,.
\end{equation}
If $k=2$, there is also a solution where $s\ge q-3$, because $k(q-s-2) = 2$ if $s=q-3$ and $k(q-s-2) = 0$ if $s=q-2$. 
Then eq.~\eqref{eq:Ck_low0} yields the following:
\begin{equation}
\begin{split}
  C_k(-2,|n|+1-2k) &= \sum_{s=q-3}^{q-2} (-1)^{2(q-s-2)} \binom{2}{2(q-s-2)} \binom{s+1}{s}
  \\
  &= \binom{2}{2} \binom{q-2}{q-3} + \binom{2}{0} \binom{q-1}{q-2}
  \\
  &= 2q-3 \,.
\end{split}
\end{equation}
For all $k\ge2$, the corresponding power of $x$ is $x^{|n|+1-2k}$.
This proves the first two cases in eq.~\eqref{eq:sec_term_neg}.
Next suppose $1 \le r \le k-1$. 
Employing the same reasoning used for eq.~\eqref{eq:high_term_neg} for $r>0$,
now we deduce the lowest value of $h$ for which a solution exists is $h=r$. We obtain the following:
\begin{equation}
\label{eq:Ck_low1}
\begin{split}
  C_k(-(1+r),|n|+1-k(1+r)) &= \sum_{s=0}^{q-r-1} (-1)^{k(q-s-r-1)+r} %\times
  %\\
  %&\qquad\quad
  \binom{r+1}{k(q-r-s-1)+r} \binom{r+s}{s} \,.
\end{split}
\end{equation}
If $q<r+1$ there is no second highest term, hence we assume $q\ge r+1$.
We must have $q-r-s-1=0$, i.e.~$s=q-r-1$, else the first binomial coefficient in eq.~\eqref{eq:Ck_low1} vanishes.
Then eq.~\eqref{eq:Ck_low1} has a solution for all $k\ge2$.
Only the last term in the sum in eq.~\eqref{eq:Ck_low1} contributes and we obtain the following:
\begin{equation}
\begin{split}
  C_k(-(1+r),|n|+1-k(1+r)) &= (-1)^r \binom{r+1}{r} \binom{q-1}{q-r-1}
  \\
  &= (-1)^r (r+1) \binom{q-1}{r} \,.
\end{split}
\end{equation}
The corresponding power of $x$ is $x^{|n|+1-k(r+1)}$.
\end{proof}

%\newpage
\setcounter{equation}{0}
\section{Skolem problem}\label{sec:Skolem}
\subsection{Statement of problem}
The \textit{Skolem problem} is as follows:
let $u$ denote the set $\{ u_n,\, n\in\mathbb{Z}\}$ of a sequence of numbers which satisfy a linear recurrence with constant coefficients.
Then find the set $Z(u) = \{n\in\mathbb{Z}:\, u_n=0\}$.
The cardinality of $Z(u)$, if it is finite, is called the \textit{zero-multiplicity} of the sequence $u$.
There is no known general algorithm to find $Z(u)$, or even its cardinality.
However, Skolem \cite{Skolem1933} proved the following result:
if the coefficients of a linear recurrence sequence are rational,
then the set $Z(u)$ is a union of finitely many arithmetic progressions together with a (possibly empty) finite set.

\subsection{Skolem problem for $k$-Fibonacci numbers}
For the $k$-Fibonacci numbers $F_{n,k}$, the Skolem problem was solved by
Garc{\'i}a, G{\'o}mez and Luca \cite{GarciaGomezLuca2020,GarciaGomezLuca2024}.
For fixed $k\ge2$, they defined $\zeta_k$ as the zero-multiplicity of the $k$-Fibonacci sequence.
In (\cite{GarciaGomezLuca2020}, Corollary 1.2), they showed that $\zeta_2=1$, $\zeta_3=4$ and $\zeta_k = k(k-1)/2$ for $k\in[4,500]$.
They also showed that $\zeta_k \ge k(k-1)/2$ for all $k\ge2$.
Later in (\cite{GarciaGomezLuca2024}, Theorem 2), they showed that $\zeta_k = k(k-1)/2$ for $k\ge 501$,
i.e.~$\zeta_k = k(k-1)/2$ for all $k\ge4$.
For fixed $k\ge2$, they also determined the values of $n$ for which $F_{n,k}$ vanishes.

Our findings are related the Skolem problem for the $k$-Fibonacci numbers $F_{n,k}$ as follows.
Observe that for $x=1$, the recurrences in eqs.~\eqref{eq:rec_Fnk_num} and \eqref{eq:rec_Fnk_pos} coincide.
\begin{enumerate}
\item
If a $k$-Fibonacci polynomial vanishes identically, then the corresponding $k$-Fibonacci number equals zero.
For fixed $k\ge2$, the set of indices for which the $k$-Fibonacci polynomials vanish identically forms a union of arithmetic progressions.
The indices we obtained agree with those found by Garc{\'i}a, G{\'o}mez and Luca.
\item
If a nonvanishing $k$-Fibonacci polynomial has a root $x=1$, then the corresponding $k$-Fibonacci number also equals zero.
The Tribonacci polynomial $T_{-17}(x) = 1 -5x^3 -6x^6 +4x^9 +5x^{12} +x^{15}$ has a root $x=1$ but is not identically zero.
The special case $T_{-17}(1)=0$ is an example of the finite set mentioned by Skolem
and also agrees with the findings by Garc{\'i}a, G{\'o}mez and Luca.
\item
The results of Garc{\'i}a, G{\'o}mez and Luca prove that $T_{-17}(x)$ is the only nonvanishing $k$-Fibonacci polynomial with a root $x=1$.
\end{enumerate}

%\newpage
\setcounter{equation}{0}
\section{Factorization of polynomials}\label{sec:poly_fac}
For fixed $k\ge2$ and all $n\in\mathbb{Z}$, we have proved that the
lowest power of $x$ which appears in a nonvanishing polynomial is $x^{r_{n,k}}$
(see eqs.~\eqref{eq:low_pos} and \eqref{eq:low_neg}).
We have also seen that the powers of $x$ increase in steps of $h$.
Hence for fixed $k\ge2$ and all $n\in\mathbb{Z}$, 
the polynomial $\mathcal{F}_{n,k}(x)$ has the structure
\begin{equation}  
\label{eq:structpoly}
\mathcal{F}_{n,k}(x) = x^{r_{n,k}} P_{n,k}(x^k) \,.
\end{equation}
For a nonvanishing polynomial $\mathcal{F}_{n,k}(x)$,
(i) the polynomial $P_{n,k}(x^k)$ is a function of $x^k$ only and $P_{n,k}(0) \ne 0$,
and (ii) if $\mathcal{F}_{n,k}(0)=0$, the root $x=0$ has multiplicity $r_{n,k}$.
For $n>0$, the above results were proved by Hoggatt and Bicknell \cite{HoggattBicknell_FibGen}.
(See also \cite{HeSimonRicci_num}.)
\begin{remark}
The nonzero roots have a $k$-fold rotational symmetry around the origin in the complex plane.
Kaymak and {\"O}zg{\"u}r \cite{Kaymak_Ozgur} stated matter succinctly (although they treated only $n>0$).
Let $\omega_k$ be a primitive $k^{th}$ root of unity:
the roots of $\mathcal{F}_{n,k}(\omega_kx)$ and $\mathcal{F}_{n,k}(x)$ are identical.
\end{remark}

We can factorize the polynomials in more detail than eq.~\eqref{eq:structpoly}.
It is well-known that for $n \in [1,k+1]$, the $k$-Fibonacci polynomials are as follows:
\begin{equation}
\label{eq:Fnk_simple}
\mathcal{F}_{n,k}(x) = \begin{cases} 1 & \qquad (n=1)\,, \\ x^{k+1-n}(x^k+1)^{n-2} &\qquad (n \in[2,k+1])\,. \end{cases}
\end{equation}
It does not seem to be recognized that a similar pattern
to eq.~\eqref{eq:Fnk_simple}
extends to all higher values of $n$.
We make a more precise statement as follows.
We have already noted the prefactor $x^{r_{n,k}}$ in eq.~\eqref{eq:structpoly}.
Here we remark on the structure of the polynomial $P_{n,k}(x^k)$ in eq.~\eqref{eq:structpoly}.
Observe that for $n\in[1,k+1$], $P_{n,k}(x^k)=(x^k+1)^{\rho_{n,k}}$, where $\rho_{n,k}=0$ for $n=1$ and $\rho_{n,k}=n-2$ for $n\in[2,k+1]$.
%$\rho= 0,0,1,2,\dots,k-1$.
We extend the definition of $\rho_{n,k}$ as follows for all $n\ge1$:
\begin{equation}
\label{eq:rho_def_pos}
\rho_{n,k} = ((n-2)\!\!\!\mod(k+1))\!\!\!\mod k \,.
\end{equation}
The value of $\rho_{n,k}$ repeats with a period $k+1$.
Curiously, $\rho_{n,k}$ vanishes for \emph{two} values of $n$ in an interval of length $k+1$,
viz.~$n=s(k+1)+1$ and $n=s(k+1)+2$, for $s\ge0$.
We extend the definition of $\rho_{n,k}$ to $n\le0$ as follows:
\begin{equation}
\label{eq:rho_def_neg}
\begin{split}
\rho_{n,k} &= (((k+1)|n|-|n|-2)\!\!\!\mod(k+1))\!\!\!\mod k
\\
&= ((k|n|-2)\!\!\!\mod(k+1))\!\!\!\mod k \qquad\qquad\qquad (n\le0) \,.
\end{split}
\end{equation}
\begin{proposition}\label{prop:poly_extra_struct}
For $k\ge 2$ and $n\in\mathbb{Z}$, the $k$-Fibonacci polynomial $\mathcal{F}_{n,k}(x)$ has the form below:
\begin{equation}
\label{eq:Fnk_factor}
\mathcal{F}_{n,k}(x) = x^{r_{n,k}}(x^k+1)^{\rho_{n,k}}Q_{n,k}(x^k) \,.
\end{equation}
%Here $r_{n,k}$ was defined in eq.~\eqref{eq:q_r_def} for all $n\in\mathbb{Z}$.
%Next, $\rho_{n,k}$ was defined in eq.~\eqref{eq:rho_def_pos} for $n>0$ and in eq.~\eqref{eq:rho_def_neg} $n\le0$.
\end{proposition}
\begin{proof}
We have already established the prefactor of $x^{r_{n,k}}$.
Set $\mathcal{F}_{n,k}(x) = A_{n,k}(x)/x^n$, then the recurrence for $A_{n,k}(x)$ is as follows:
\begin{equation}  
A_{n,k}(x) = x^k\bigl(A_{n-1,k}(x) +A_{n-2,k}(x) +\dots +A_{n-k,k}(x)\bigr) \,.
\end{equation}
Set $x^k=-1$, then we obtain the following:
\begin{equation}  
\label{eq:rec_A}
A_{n,k}(x) = -\bigl(A_{n-1,k}(x) +A_{n-2,k}(x) +\dots +A_{n-k,k}(x)\bigr) \,.
\end{equation}
The characteristic polynomial $C(z)$ of the recurrence in eq.~\eqref{eq:rec_A} is as follows:
\begin{equation}  
C(z) = z^k +z^{k-1} +\dots +1 = \frac{z^{k+1}-1}{z-1} \,.
\end{equation}
The roots of $C(z)$ are the $(k+1)^{th}$ roots of unity excluding $z=1$,
say $z_j = \omega_{k+1}^j$, $j\in[1,k]$, and $\omega_{k+1}$ is a primitive $(k+1)^{th}$ root of unity.
Writing a Binet-style formula $A_{n,k}(x) = \sum_j c_jz_j^n$, where the $c_j$ are a set of coefficients chosen to fit the initial values,
it follows immediately that $A_{n+k+1,k}(x) = A_{n,k}(x)$, i.e., $A_{n,k}(x)$ repeats with a period $k+1$.
It follows that $\mathcal{F}_{n+k+1,k}(x) = (1/x^{k+1})\mathcal{F}_{n,k}(x) = (-1/x)\mathcal{F}_{n,k}(x)$.
It also follows that if $\mathcal{F}_{n,k}$ contains a factor $(x^k+1)$ with multiplicity $m_{n,k}$, then
$\mathcal{F}_{n+k+1,k}$ also contains a factor $(x^k+1)$ with the same multiplicity $m_{n,k}$.
We know that $\mathcal{F}_{n,k}$ indeed contains a factor $(x^k+1)$ with the multiplicity $\rho_{n,k}$ for $n\in[1,k+1]$,
whence the result follows.
\end{proof}
For a nonvanishing polynomial,
$Q_{n,k}(x^k)$ is a polynomial in $x^k$ only and $Q_{n,k}(x^k)$ is not divisible by $x^k+1$.
%\begin{corollary}
%From Propositions \ref{prop:low_pos} and \ref{prop:low_neg}, we can make the stronger statement for nonvanishing polynomials:
Also, $Q_{n,k}(0)\ne0$ and
from Propositions \ref{prop:low_pos} and \ref{prop:low_neg}, we deduce that
\begin{equation}  
  Q_{n,k}(0) = \begin{cases} \displaystyle \binom{q_{n,k}+r_{n,k}}{r_{n,k}} &\qquad (n>0)\,,\phantom{\Biggl|}
    \\ \displaystyle (-1)^{r_{n,k}}\binom{q_{n,k}}{r_{n,k}} &\qquad (n\le0)\,. \end{cases}
\end{equation}
%\end{corollary}
If $k$ is odd, then $x=-1$ is a real (integer) root of $\mathcal{F}_{n,k}(x)$, with multiplicity $\rho_{n,k}$. 
Observe that the value of $r_{n,k}$ repeats with a period $k$ but the value of $\rho_{n,k}$ repeats with a period $k+1$,
so they do not remain in sync as $n$ increases.
For $n>0$, Table \ref{tb:Tri_Quad_poly}
displays examples of the factorization in eq.~\eqref{eq:Fnk_factor}
for the Tribonacci polynomials $T_n(x)$ and the Quadranacci polynomials $Q_n(x)$
for $n\in[1,2(k+1)]$, respectively.
Examples are also tabulated for the Tribonacci polynomials $T_n(x)$ and the Quadranacci polynomials $Q_n(x)$ in Table \ref{tb:Tri_Quad_poly_neg}
%for $n=-10,-11,\dots,-19$,
for $-10 \ge n\ge -19$,
including a vanishing polynomial at $n=-10$ for $Q_n(x)$.
The corresponding values of $\rho_{n,3}$ and $\rho_{n,4}$ are also tabulated in Tables \ref{tb:Tri_Quad_poly} and \ref{tb:Tri_Quad_poly_neg}.

%\newpage
\setcounter{equation}{0}
\section{Properties of polynomial roots}\label{sec:poly_roots}
For $n>0$, the structure of the polynomial in eq.~\eqref{eq:Fnk_factor} was correctly conjectured by He, Simon and Ricci (\cite{HeSimonRicci_num}, Conjecture 1),
in which they also correctly stated the pattern of the roots listed above.
Unfortunately their values for the exponents $r_{n,k}$ and $\rho_{n,k}$ are not always correct.
They also stated the roots extend outwards from the origin (in ``stars'').
This is true for $n>0$ but not necessarily so for $n<0$.
Argand diagram plots of the roots of $\mathcal{F}_{-40,3}(x)$ and $\mathcal{F}_{40,8}(x)$ are displayed in Figure \ref{fig:roots_poly}.
Compare the plots of the roots for $n=\pm40$ in Figure \ref{fig:roots_poly}.
For $n=-40$, one of the ``stars'' has positive and negative real roots, which straddle the origin (and two complex roots).
The other two ``stars'' also straddle the origin.
\begin{remark}\label{rem:root_k2}
  For $k=2$ and $n>0$, Hoggatt and Bicknell \cite{HoggattBicknell_roots} 
  showed that the roots of the Fibonacci polynomials $F_n(x)$ are pure imaginary
  (including the origin, if it is a root) and given by $2i \cos(j\pi/n)$, $j\in[1,n-1]$,
  and all the roots are simple.
\end{remark}
\begin{proposition}\label{prop:realroots}
(The nonzero real roots of $\mathcal{F}_{n,k}(x)$ for $n\ge3$.)
Recall $\mathcal{F}_{1,k}(x)=1$ and $\mathcal{F}_{2,k}(x)=x^{k-1}$, hence we exclude them and treat only $n\ge3$. Then
for even $k$, $\mathcal{F}_{n,k}(x)$ has no nonzero real roots and
for odd $k$, $\mathcal{F}_{n,k}(x)$ has no positive real roots but may have negative real roots.
\end{proposition}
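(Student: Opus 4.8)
The plan is to reduce everything to a sign analysis, using two facts already in hand for $n\ge1$: every coefficient of $\mathcal{F}_{n,k}(x)$ is positive (item (i) following eq.~\eqref{eq:HB_pos}), and the polynomial factors as $\mathcal{F}_{n,k}(x)=x^{r_{n,k}}P_{n,k}(x^k)$ with $P_{n,k}(0)\ne0$ (Cor.~\ref{cor:structpoly}, eq.~\eqref{eq:structpoly}). Since the coefficients of $P_{n,k}$ are just the relabelled coefficients of $\mathcal{F}_{n,k}$, I may write $P_{n,k}(y)=\sum_i a_i y^i$ with all $a_i>0$ and $a_0=P_{n,k}(0)>0$. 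The sign of $\mathcal{F}_{n,k}$ on the real axis is then governed entirely by the sign of $x^{r_{n,k}}$ and by the sign of the argument $x^k$ that is fed into $P_{n,k}$.

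First I would handle the positive reals, for arbitrary parity of $k$. For $x>0$ we have $x^{r_{n,k}}>0$ and $x^k>0$, so $P_{n,k}(x^k)=\sum_i a_i (x^k)^i>0$ because every term is nonnegative and $a_0>0$. Hence $\mathcal{F}_{n,k}(x)>0$ for all $x>0$, and there are no positive real roots; this single observation disposes of the ``no positive real roots'' clause in both (i) and (ii). Next, for even $k$ I would take $x=-t$ with $t>0$, so that $x^k=(-t)^k=t^k>0$ and thus $P_{n,k}(x^k)=P_{n,k}(t^k)>0$, while $x^{r_{n,k}}=(-1)^{r_{n,k}}t^{r_{n,k}}\ne0$. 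Therefore $\mathcal{F}_{n,k}(-t)=(-1)^{r_{n,k}}t^{r_{n,k}}P_{n,k}(t^k)\ne0$ for every $t>0$, so there are no negative real roots either. Combined with the positive-real case, this proves (i).

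Finally, for odd $k$ and $x=-t<0$ the argument of $P_{n,k}$ is $x^k=-t^k<0$, and positivity of the coefficients no longer forces $P_{n,k}(-t^k)>0$: the polynomial may vanish at a negative argument, so negative real roots are not excluded. To show they genuinely occur it suffices to exhibit one witness, which eq.~\eqref{eq:Fnk_simple} supplies: for $3\le n\le k+1$ we have $\mathcal{F}_{n,k}(x)=x^{k+1-n}(x^k+1)^{n-2}$, and for odd $k$ the factor $x^k+1$ vanishes at the real point $x=-1$ with multiplicity $n-2\ge1$. This establishes (ii). I do not expect a genuine obstacle here, since the whole statement collapses to the coefficient-positivity plus the $x^k$-structure of the polynomial; the only point needing care is that for odd $k$ the assertion is merely that negative roots \emph{may} occur, so beyond noting that the positivity argument breaks down one must produce an explicit example rather than attempt to prove that negative roots always exist.
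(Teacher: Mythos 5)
Your proof is correct, and it rests on the same two pillars as the paper's: positivity of all coefficients of $\mathcal{F}_{n,k}(x)$ for $n\ge1$, and the structure $\mathcal{F}_{n,k}(x)=x^{r_{n,k}}P_{n,k}(x^k)$ with $P_{n,k}(0)\ne0$ from Cor.~\ref{cor:structpoly}. The difference is the mechanism. The paper invokes Descartes' rule of signs: no sign changes for $x>0$; under $x\mapsto -x$ the coefficients of $P_{n,k}((-x)^k)$ are unchanged for even $k$ but alternate for odd $k$, whence sign-change counting governs the negative roots. You bypass Descartes entirely by direct evaluation: a polynomial with positive coefficients and positive constant term satisfies $P_{n,k}(t)>0$ for all $t\ge0$, which in one stroke excludes positive real roots for every $k$ and, since $(-t)^k=t^k>0$ for even $k$, negative real roots as well. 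This is marginally more elementary and arguably cleaner, since Descartes' rule is overkill when all one needs is nonvanishing at nonnegative arguments. For the odd-$k$ clause both proofs correctly read ``may have'' as requiring only a witness that the positivity argument genuinely fails: the paper exhibits the Tribonacci examples $T_3(x)=x(1+x^3)$ and $T_4(x)=(1+x^3)^2$ (root $x=-1$) together with $T_5(x)$ (no nonzero real roots, showing negative roots are not forced), whereas you extract from eq.~\eqref{eq:Fnk_simple} the uniform family $\mathcal{F}_{n,k}(x)=x^{k+1-n}(x^k+1)^{n-2}$ for $3\le n\le k+1$, which has the root $x=-1$ with multiplicity $n-2\ge1$ for \emph{every} odd $k$. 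Your witness is slightly stronger than the paper's, which is stated only for $k=3$; the one thing the paper's proof adds that yours omits is the explicit counterexample ($T_5$) showing that negative real roots need not occur, but since the proposition asserts only the possibility, your argument fully establishes the statement as written.
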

\begin{proof}
We employ Descartes' rule of signs.
All the coefficients in $\mathcal{F}_{n,k}(x)$ are positive hence there are no positive real roots.
For negative real roots, replace $x$ by $-x$.
If $k$ is even, then $P_{n,k}((-x)^k)= P_{n,k}(x^k)$, hence $\mathcal{F}_{n,k}(x)$ has no negative real roots,
i.e.~if $n\ge3$ and $k$ is even, $\mathcal{F}_{n,k}(x)$ has no nonzero real roots.
If $k$ is odd, then $P_{n,k}((-x)^k)= P_{n,k}(-x^k)$,
hence successive coefficients have alternating signs.
If the number of sign changes is odd, there is an odd number of negative real roots, i.e.~at least one.
If the number of sign changes is even, there are zero else an even number of negative real roots.
Examples are the Tribonacci polynomials
$T_3(x) = x(1 +x^3)$ (root $x=-1$, multiplicity $1$), 
$T_4(x) = 1 +2x^3 +x^6$ (root $x=-1$, multiplicity $2$), and
$T_5(x) = x^2(3 +3x^3 +x^6)$, which has no nonzero real roots.
\end{proof}
\begin{remark}
  Numerical calculations reveal the following findings.
\begin{enumerate}
\item
  For $n<0$ and even $k$, numerical calculations and symbolic manipulations indicate that the coefficients in a nonvanishing polynomial have no sign changes.
  This has been verified for all even $k$ such that $2 \le k \le 100$ and $|n|\le1000$.
  The fact that there are vanishing polynomials proves that the terms in the recurrence can cancel to zero.
  However, if they do not, then all the coefficients in the polynomial have the same sign.
  The reason for this is not known.  
  This has the consequence that, for even $k$ and all $n\in\mathbb{Z}$, the polynomial $\mathcal{F}_{n,k}(x)$ has no nonzero real roots.
\item
  For $n<0$ and odd $k$, the polynomials may have positive and/or negative real roots.
  See the example of $\mathcal{F}_{-40,3}(x)$ in Figure \ref{fig:roots_poly}.
A numerical search for nonzero integer roots reveals that $x=\pm1$ are the only cases.
The root $x=-1$ was treated above.
For $x=1$, the only solution is the Tribonacci polynomial $T_{-17}(x)$, as noted in Sec.~\ref{sec:Skolem}.
\item
For $n>0$ and odd $k$, numerical studies indicate that
all the real roots of $\mathcal{F}_{n,k}(x)$ lie in the interval $[-1,0]$.
The endpoints $-1$ and $0$ are respectively the lowest and highest values of a real root, and are attained for some values of $n$, as noted above.
For example, consider the Tribonacci polynomial %$T_{10}(x)$:
\begin{equation}
  T_{10}(x) = 1 +16x^3 +45x^6 +50x^9 +28x^{12} +8x^{15} +x^{18} \,.
\end{equation}
%It has two real roots $x_{\rm root} \simeq (-0.862794,\, -0.427835)$, which both lie in the interval $(-1,0)$.
It has two real roots, approximately $-0.862794$ and $-0.427835$, which both lie in the interval $(-1,0)$.

\item
For $n<0$ and odd $k$, a numerical search for real roots other than $0$ and $\pm1$ reveals that 
%$x_{\rm root} <-1$ or $x_{\rm root} >0$.
the roots are either positive or else less than $-1$.
There are no real roots in the interval $(-1,0)$.
For example, consider the Tribonacci polynomial %$T_{-20}(x)$:
\begin{equation}
  T_{-20}(x) = (1+x^3)^2(1 -16x^3 -4x^6 +4x^9 +x^{12}) \,.
\end{equation}
The root $-1$ has multiplicity $2$. The noninteger real roots are approximately
$-1.584586$, $-1.272728$, $0.394958$, and $1.255444$.
%\begin{equation}
%  x_{\rm root} \simeq (-1.584586,\, -1.272728,\, 0.394958, \,1.255444 )\,.
%\end{equation}
They are either less than $-1$ or else positive, but none in the interval $(-1,0)$.

\item
For all tested values $k\ge2$ and $n\in\mathbb{Z}$,
numerical calculations indicate that for a nonvanishing polynomial $\mathcal{F}_{n,k}(x)$,
only the roots $x=0$ and $x^k = -1$, if they exist, ever exhibit a multiplicity greater than one.
All the other roots are simple.
\end{enumerate}
\end{remark}

Recall from eq.~\eqref{eq:structpoly} that all the nonzero roots of a nonvanishing polynomial have the form $x^k = \textrm{(number)}$.
For $n<0$, recall the enumerated list following eq.~\eqref{eq:genfcn_Fibpoly_neg_w_pow}
and the statement \#5 that $\mathcal{F}_{-2k,k} = -x(x^k+2)$.
A more detailed analysis yields the following pattern for $n=-2k$, $n=-3k$, and $n=-4k$:
\begin{equation}  
\begin{split}
  \mathcal{F}_{-2k,k}(x) &= -x(x^k+2) \,,
  \\
  \mathcal{F}_{-3k,k}(x) &= -x(x^k+1)(x^k+3) \,,
  \\
  \mathcal{F}_{-4k,k}(x) &= -x(x^k+1)^2(x^k+4) \qquad\qquad (k\ge3) \,.
\end{split}
\end{equation}
Hence for $n<0$, there are roots where $x^k = -2$ or $x^k = -3$, etc.
Unfortunately the pattern does not extend indefinitely.

\begin{conjecture}\label{conj:xks}
Numerical calculations and symbolic manipulations yield the following.
For $k\ge2$ and $n = -sk$, where $s\in[2,k+1]$ (the pattern fails for $s\ge k+2$),
\begin{equation}  
  \mathcal{F}_{-sk,k}(x) = -x(x^k+1)^{s-2}(x^k+s) \,.
\end{equation}
Hence $\mathcal{F}_{-sk,k}(x)$ has roots $x^k=-s$.
The case $s=2$ was derived in the enumerated list following eq.~\eqref{eq:genfcn_Fibpoly_neg_w_pow}.
These are not the only $(n,k)$ tuples with a root such that $x^k=-j$, where $j>1$ is an integer.
Numerical calculations indicate that for fixed $k\ge2$, there is a finite set of $(n,k)$ tuples with roots $x^k=-j$, for $j\in[2,k+1]$.
All such roots appear to be simple.
For odd $k$, the concomitant real root $-(j^{1/k})$ is negative and less than $-1$.
\end{conjecture}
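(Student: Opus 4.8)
The plan is to reduce the claimed factorization entirely to the structural results already established, showing that for $2\le s\le k+1$ the factor $Q_{n,k}(x^k)$ is forced to be linear in $x^k$ and is then pinned down by its leading and constant coefficients alone.

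Setting $n=-sk$, so that $|n|=sk$ and $|n|+1=sk+1$, I would first record the three auxiliary parameters. Since $1<k$, eq.~\eqref{eq:q_r_def} gives $q_{n,k}=s$ and $r_{n,k}=1$. For the exponent $\rho_{n,k}$ I would use $k\equiv-1\pmod{k+1}$, hence $k^2\equiv1$ and $k|n|-2=sk^2-2\equiv s-2\pmod{k+1}$; for $2\le s\le k+1$ one has $0\le s-2\le k-1$, so both reductions are trivial and $\rho_{n,k}=((s-2)\bmod(k+1))\bmod k=s-2$. The restriction $s\le k+1$ is exactly what keeps this reduction clean, since it forces $(s-2)\bmod(k+1)=s-2$ and $(s-2)\bmod k=s-2$.

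With these values in hand, Prop.~\ref{prop:poly_extra_struct} gives $\mathcal{F}_{-sk,k}(x)=x(x^k+1)^{s-2}Q_{n,k}(x^k)$, reducing the problem to identifying $Q_{n,k}$. Comparing degrees, Prop.~\ref{prop:deg} yields $d_{n,k}=|n|+1-k=k(s-1)+1$ (the case $r=1$), whereas the factored form has degree $1+k(s-2)+k\deg_{x^k}Q_{n,k}$; subtracting forces $\deg_{x^k}Q_{n,k}=1$, so $Q_{n,k}(x^k)=ax^k+b$. The two coefficients are then read off directly: the highest term $H_{n,k}(x)=-x^{k(s-1)+1}$ from eq.~\eqref{eq:high_term_neg} matches $a\,x^{k(s-1)+1}$, giving $a=-1$, while the corollary expressing $Q_{n,k}(0)$ for nonvanishing polynomials gives $b=(-1)^{r}\binom{q}{r}=-s$. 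Hence $Q_{n,k}(x^k)=-(x^k+s)$ and the factorization follows. The consequences are then immediate: the factor $(x^k+s)$ supplies the roots $x^k=-s$, and for odd $k$ the unique real $k$-th root $-s^{1/k}$ satisfies $-s^{1/k}<-1$ because $s\ge2$.

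To see why the pattern must break for $s\ge k+2$, the same congruence gives, for example at $s=k+2$, $\rho_{n,k}=0$, so the actual polynomial carries no factor $(x^k+1)$ while the conjectured right-hand side $-x(x^k+1)^{k}(x^k+s)$ does; more generally $Q_{n,k}$ acquires degree $s-1-\rho_{n,k}>1$ in $x^k$ and cannot collapse to $-(x^k+s)$. The genuinely hard part is not the displayed identity, which is completely determined by the lowest term, the highest term and $\rho_{n,k}$, but the remaining assertions of the conjecture: that outside the family $n=-sk$ there are only finitely many further $(n,k)$ carrying an integer root $x^k=-j$ with $2\le j\le k+1$, and that every such root is simple. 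These concern the entire coefficient sum in eq.~\eqref{eq:Cnk_neg_qr} across all $n$ and do not follow from the leading/constant-coefficient bookkeeping above; establishing them is where I expect the real difficulty to lie.
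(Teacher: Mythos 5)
Your proposal is correct, and it takes a genuinely different route from the paper --- indeed a stronger one, because the paper offers no proof at all: the statement is labelled a conjecture and its only support is numerical/symbolic computation. Your bookkeeping checks out. For $n=-sk$ one indeed gets $q_{n,k}=s$ and $r_{n,k}=1$ from eq.~\eqref{eq:q_r_def}, and since $k\equiv-1\pmod{k+1}$ one has $k|n|-2=sk^2-2\equiv s-2\pmod{k+1}$, so $\rho_{n,k}=s-2$ exactly when $2\le s\le k+1$; then Prop.~\ref{prop:poly_extra_struct} gives $\mathcal{F}_{-sk,k}(x)=x(x^k+1)^{s-2}Q_{n,k}(x^k)$, the degree formula $d_{n,k}=|n|+1-k$ of Prop.~\ref{prop:deg} forces $\deg_{x^k}Q_{n,k}=1$, and the leading coefficient $-1$ from eq.~\eqref{eq:high_term_neg} (with $\binom{q-1}{r-1}=\binom{s-1}{0}=1$) together with $Q_{n,k}(0)=(-1)^r\binom{q}{r}=-s$ from Prop.~\ref{prop:low_neg} pins down $Q_{n,k}(\xi)=-(\xi+s)$, which is the conjectured identity. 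Your explanation of the failure at $s\ge k+2$ is also sound: there $s-2\ge k$, so $\rho_{n,k}\le k-1<s-2$ and the exact multiplicity of $(x^k+1)$ (item (ii) of Prop.~\ref{prop:poly_extra_struct}, which asserts $Q_{n,k}$ is not divisible by $x^k+1$) contradicts the conjectured exponent; note this part of your argument, unlike the identity itself, needs exactness of $\rho_{n,k}$ rather than mere divisibility, but both are asserted by the proposition you cite. What each approach buys: the paper's numerics also cover the remaining assertions (that the extra $(n,k)$ tuples with roots $x^k=-j$ form a finite set, and that all such roots are simple), which you correctly and honestly identify as the genuinely open content not reachable by leading/constant-coefficient bookkeeping; your derivation, conversely, upgrades the displayed factorization for all $k\ge2$ and $2\le s\le k+1$ from empirical observation to a consequence of the paper's own structural results, and supplies a structural reason (the $\rho_{n,k}$ congruence) for the breakdown at $s=k+2$ that the paper only records as computational fact. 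The one caveat to flag is that your proof is conditional on Prop.~\ref{prop:poly_extra_struct}, whose multiplicity claim the paper proves by a periodicity argument at $x^k=-1$ that is itself somewhat terse; given that proposition as stated, your argument is complete.
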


Next let us study the upper bounds on the amplitudes of the nonzero roots of the polynomial $\mathcal{F}_{n,k}(x)$,
i.e.~the roots of $P_{n,k}(x^k)$.
For $k\ge2$ and $n\in\mathbb{Z}$,
let $\zeta_{n,k}$ denote the maximum amplitude of $|x_{\rm root}|^k$,
where $P_{n,k}(x_{\rm root}^k)=0$.
%i.e.~the amplitude of the $k^{th}$ power of a nonzero root of $\mathcal{F}_{n,k}(x)$.
%Set $\zeta_{n,k}=0$ if there are no nonzero roots, including vanishing polynomials.
Set $\zeta_{n,k}=0$ if $\mathcal{F}_{n,k}(x)$ has no nonzero roots, including vanishing polynomials.
%Sample results for $\zeta_{n,k}$ are displayed in Figure \ref{fig:upbound}, for $n>0$ ($k=2,3,8$) and $n<0$ ($k=4$ only).
Sample results for $\zeta_{n,k}$ are displayed in Figure \ref{fig:upbound}.
For $n>0$ (upper panel), the curves are plotted for $k=2$, $k=3$, and $k=8$.
For $n<0$ (lower panel), we fix $k=4$ and index the curves by the remainder $r_{n,k}$.
The results are surprising.
\begin{enumerate}
\item
  For $n>0$, for fixed $k\ge2$ the value of $\zeta_{n,k}$ approaches a supremum as $n$ increases.
  The value of the supremum is small and decreases as $k$ increases.
  For $k=2$, Hoggatt and Bicknell \cite{HoggattBicknell_roots} proved the supremum is $4$.
  See also Kaymak \cite{Kaymak} for bounds for the roots of the Tribonacci polynomials for $n>0$.
  See the plot of the roots of $\mathcal{F}_{40,8}(x)$ in Figure \ref{fig:roots_poly}:
  the evidence suggests that as $k$ and $n$ increase, the roots spread out in angle but not radius.
  
\item
  For $n<0$, for fixed $k\ge3$ the upper bound $\zeta_{n,k}$ is much larger and increases approximately linearly for large $|n|$.
  The values in Figure \ref{fig:upbound} for $k=4$ and $n<0$ form four branches, indexed by the remainder $r_{n,k}$.
  Similar results were obtained for other values of $k$ (there are $k$ branches for fixed $k$).
  Recall Conjecture \ref{conj:xks}, that for $n<0$, $\mathcal{F}_{-sk,k}(x)$ has roots given by $x^k = -s$, $s\in[1,k+1]$.
  Hence $\zeta_{n,k}$ grows linearly with $|n|$ for $n<0$, i.e.~without bound.
\end{enumerate}
\begin{conjecture}
Numerical evidence indicates that for even $k\ge2$ and $n>0$, 
the supremum $\limsup_{n\to\infty}\zeta_{n,k}$, say $\hat{\zeta}_k$, is given by the unique real root $\hat{\zeta}_k>1$ of the equation
\begin{equation}
\label{eq:hat_zeta_k}  
\frac{\hat{\zeta}_k}{(\hat{\zeta}_k-1)^{k+1}} = \frac{k^k}{(k+1)^{k+1}} \,.
\end{equation}
For $k=2$, it is easily verified that $\hat{\zeta}_2=4$.
The numerically computed values of $\hat{\zeta}_k$ are tabulated in Table \ref{tb:hat_zeta_k}, for even $k\in[2,20]$.
It is not known why eq.~\eqref{eq:hat_zeta_k} works.
A similar formula for odd $k$ has not been found.
\end{conjecture}
\begin{conjecture}
For fixed $k\ge3$ and $n<0$, the upper bound is $\zeta_{n,k} \le \lfloor |n|/k\rfloor$.
The bound is attained whenever $r_{n,k}=1$ (equivalently $n=-sk$, where $s\ge1$), i.e.~it is a tight bound.
The slope of the branches in the value of $\zeta_{n,k}$ decreases monotonically as $r_{n,k}$ increases from $1$ through $k-1$.
The lowest branch is indexed by $r_{n,k}=k-1$.
The branch indexed by $r_{n,k}=0$ does not fit a simple pattern.
\end{conjecture}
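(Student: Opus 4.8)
The plan is to reduce the entire statement to a root-modulus estimate on the companion polynomial $P_{n,k}$ in the variable $y=x^k$ from eq.~\eqref{eq:structpoly}. Since every nonzero root of $\mathcal{F}_{n,k}(x)$ satisfies $x^k=y$ for some root $y$ of $P_{n,k}$, the quantity $\zeta_{n,k}$ is exactly the largest root modulus (spectral radius) of $P_{n,k}(y)$. By Prop.~\ref{prop:deg} this polynomial has $y$-degree $q-1$ when $r_{n,k}\in\{0,1\}$ and $q-r$ when $r_{n,k}\ge2$, and its coefficients are the quantities $C_k(-(1+j_k),\dots)$ of eq.~\eqref{eq:Cnk_neg_qr}. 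I would then treat the two halves of the statement with different tools, because the positive-index case enjoys coefficient positivity whereas the negative-index case does not.

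For the attainment clause in (ii), I would exhibit the extremal root directly. When $n=-sk$ one has $r_{n,k}=1$ and $q_{n,k}=s=\lfloor|n|/k\rfloor$, and Conjecture~\ref{conj:xks} gives $\mathcal{F}_{-sk,k}(x)=-x(x^k+1)^{s-2}(x^k+s)$, whose roots are $x^k=-1$ and $x^k=-s$; the latter has modulus $s$, so $\zeta_{-sk,k}=s=\lfloor|n|/k\rfloor$ and the bound is tight. Thus the attainment rests on the range $s\le k+1$ of Conjecture~\ref{conj:xks}, which I would try to prove by induction on $s$, propagating the factorization across each length-$(k-1)$ nonvanishing block with the full downward recurrence eq.~\eqref{eq:rec_Fnk_neg} and the period relation $\mathcal{F}_{n+k+1,k}(x)=(-1/x)\mathcal{F}_{n,k}(x)$ from Prop.~\ref{prop:poly_extra_struct}, matching the claimed $(x^k+1)$-multiplicity against $\rho_{n,k}$.

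For the upper bound $\zeta_{n,k}\le\lfloor|n|/k\rfloor$ in (ii), the plan is a root-localization estimate on $P_{n,k}(y)$. If the coefficients were sign-definite, as the earlier remark observes empirically for even $k$, the Enestr\"om--Kakeya theorem would place every root in $|y|\le\max_i(|a_{i-1}|/|a_i|)$, and one would finish by bounding those consecutive ratios of the $C_k$ values in eq.~\eqref{eq:Cnk_neg_qr} by $\lfloor|n|/k\rfloor$. For odd $k$ the coefficients change sign, so I would first peel off the factor $(x^k+1)^{\rho_{n,k}}$ of eq.~\eqref{eq:Fnk_factor} (its roots have modulus $1\le\lfloor|n|/k\rfloor$) and then bound the remaining factor $Q_{n,k}$ by a sign-robust localization result such as Pellet's or a refined Cauchy estimate, or alternatively by an induction in $|n|$ in steps of $k$ within a fixed residue class $r$, tracking how the roots drift as the degree grows by one. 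I expect the sign-change regime to be the \emph{main obstacle}: the clean Enestr\"om--Kakeya bound is unavailable, even the empirical sign-definiteness for even $k$ is itself unproven, and the difficulty is to obtain a bound that is robust to mixed signs yet still sharp enough to yield the integer value $\lfloor|n|/k\rfloor$ rather than a loose Cauchy-type constant.

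Finally, for part (i) I would not attempt the exact constant $2.1+3.9/k$, which is plainly a numerical fit that merely reproduces the rigorous value $4$ of Hoggatt--Bicknell at $k=2$ (cf.~remark~\ref{rem:root_k2}). The realistic target is to prove only that $\limsup_{n\to\infty}\zeta_{n,k}$ exists and is finite for each fixed $k$. Here the coefficients of $\mathcal{F}_{n,k}(x)$ are all positive by eq.~\eqref{eq:HB_pos}, so Enestr\"om--Kakeya applies directly to $P_{n,k}$ and bounds every root modulus by $\max_i(a_{i-1}/a_i)$; I would then use the closed form eq.~\eqref{eq:C_pos} for $C_k(n,j)$ to show these consecutive ratios converge as $n\to\infty$, giving a finite supremum. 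Pinning down its precise numerical value, however, I expect to lie beyond the reach of this method.
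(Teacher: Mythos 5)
The statement you are attacking is presented in the paper purely as a conjecture supported by numerical experiments; the paper contains no proof, so the real question is whether your plan could actually close it, and there the central quantitative step fails. Your reduction of $\zeta_{n,k}$ to the largest root modulus of $P_{n,k}(y)$ via eq.~\eqref{eq:structpoly} is sound, but the Enestr\"om--Kakeya strategy cannot deliver either bound. For $n>0$ the leading coefficient of $P_{n,k}$ is $1$ while the next coefficient is $C_k(n-2,1)=n-2$ by eq.~\eqref{eq:Cnk1_pos}, so the maximum consecutive-coefficient ratio is at least $n-2$ and the Enestr\"om--Kakeya upper bound \emph{diverges linearly in $n$}; the ratios you hoped to show convergent do not converge, and no finite $\limsup$ can come out of this method. (Equivalently, the sum of the roots is $-(n-2)$ by eq.~\eqref{eq:sum_prod_pos}; bounded individual roots are compatible with this only because the number of roots grows with $n$, a fact to which a maximum-ratio bound is blind.) The same defect appears in part (ii) even in the sign-definite even-$k$ regime you single out as favorable: from eqs.~\eqref{eq:high_term_neg} and \eqref{eq:sec_term_neg} the ratio of the two highest coefficients is $(r+1)(q-r)/r$, which at $r=1$ equals $2(q-1)$, roughly \emph{twice} the conjectured tight bound $\lfloor|n|/k\rfloor\approx q$. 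So Enestr\"om--Kakeya overshoots the claimed integer bound even before you reach the genuinely hard mixed-sign odd-$k$ case that you correctly flag as open.

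The attainment clause has a second gap. Conjecture~\ref{conj:xks}, on which you rest it, is itself unproven, covers only $s=2,\dots,k+1$, and the paper states explicitly that the factorization pattern fails for $s\ge k+2$ --- yet the statement claims the bound is attained for \emph{all} $s=1,2,\dots$, so your plan leaves infinitely many attainment cases with no mechanism at all. Moreover, the relation $\mathcal{F}_{n+k+1,k}(x)=(-1/x)\mathcal{F}_{n,k}(x)$ that you propose as the engine of an induction is not a polynomial identity: in the paper's proof of Prop.~\ref{prop:poly_extra_struct} it holds only under the substitution $x^k=-1$, so it tracks the multiplicity of the factor $(x^k+1)$ and nothing else; it cannot propagate the factor $(x^k+s)$, whose parameter advances in steps of $k$ in $|n|$ while the relation steps by $k+1$. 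The branch-slope and $r_{n,k}$-indexing claims of the conjecture are not addressed by your proposal at all. What survives is your framing: the spectral-radius reduction, the degree bookkeeping in $q$ and $r$, and the judgment that the constant $2.1+3.9/k$ in part (i) is a numerical fit (anchored at the rigorous value $4$ for $k=2$ from \cite{HoggattBicknell_roots}) rather than a provable target.
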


%\newpage
\setcounter{equation}{0}
\section{Elementary symmetric polynomials of nonzero roots}\label{sec:elem_symm_poly}
All the relevant expressions below have already been derived.
We begin with $n>0$.
Define $N_{n,k} = \lfloor d_{n,k}/k\rfloor = \lfloor(k-1)(n-1)/k\rfloor$ for brevity.
Recall eq.~\eqref{eq:structpoly}:
let us factor out $x^{r_{n,k}}$ and define $\xi=x^k$ and write $P_{n,k}(\xi)$ below.
Let $\xi_{n,k,j}$, $j\in[1,N_{n,k}]$ denote the roots of $P_{n,k}(\xi)$, counting multiplicity,
and denote them collectively by $\vec{\xi}_{n,k}$.
Next denote the elementary symmetric polynomials of the roots of $P_{n,k}(\xi)$ by $\sigma_h(\vec{\xi}_{n,k})$, $h\in[1,N_{n,k}]$.
Since the coefficient of the highest term in $P_{n,k}(\xi)$ is unity, the expression below follows from eqs.~\eqref{eq:poly_pos} and \eqref{eq:C_pos}:
\begin{equation}
%\label{eq:sig_pos} 
\sigma_h(\vec{\xi}_{n,k,1}) = (-1)^h C_k(n-h-1,h) \qquad\qquad (1 \le h \le N_{n,k})\,.
\end{equation}
Recall $q_{n,k}$ and $r_{n,k}$ from eq.~\eqref{eq:q_r_def}.
%We omit the subscripts for clarity of the expressions below.
We omit the subscripts and write simply $q$ and $r$ to avoid cluttering the expressions below.
From eqs.~\eqref{eq:Cnk1_pos} and \eqref{eq:low_pos}, the sum and product of the roots are respectively 
\begin{align*}
  \sum_{j=1}^{N_{n,k}} \xi_{n,k,j} &= -(n-2) \,,
  \\
  \prod_{j=1}^{N_{n,k}} \xi_{n,k,j} &= (-1)^{N_{n,k}} \binom{q+r}{r} \,.
\end{align*}
See \cite{HeSimonRicci_zeros,Kaymak_Ozgur} for results for special cases, e.g.~the Tribonacci polynomials.

As always, for $n<0$ we restrict the analysis to nonvanishing polynomials.
Again define $N_{n,k} = \lfloor d_{n,k}/k\rfloor$, where now $d_{n,k}$ is given by eq.~\eqref{eq:deg_neg}.
For $n<0$, we have seen from Propositions \ref{prop:deg} and \ref{prop:sec_highest_neg}
that the count of the $h^{th}$ highest power of $x$ depends on the remainder $r$.
If $r=0$, then the count is $j_k=h$ but for $r\in[1,k-1]$, the count is $j_k=h+r-1$, where $h\in[0,N_{n,k}]$.
We again denote the elementary symmetric polynomials of the roots of $P_{n,k}(\xi)$ by $\sigma_h(\vec{\xi}_{n,k})$, $h\in[1,N_{n,k}]$.
Then we obtain the following from eqs.~\eqref{eq:poly_neg} and \eqref{eq:high_term_neg}, for $h\in[1,N_{n,k}]$:
\begin{equation}
%\label{eq:sig_neg} 
\sigma_h(\vec{\xi}_{n,k}) = \begin{cases} \displaystyle (-1)^h C_k(-(h+1),|n|+1-k(h+1)) &\quad (r=0) \,,\phantom{\Biggl|} \\
  \displaystyle (-1)^{h+r} \frac{C_k(-(h+r),|n|+1-k(h+r))}{\displaystyle\binom{q-1}{r-1}} &\quad (r\in[1,k-1]) \,.
  \end{cases}
\end{equation}
From eqs.~\eqref{eq:high_term_neg} and \eqref{eq:sec_term_neg}, the sum of the roots is
\begin{equation}
%\label{eq:sum_neg}  
\sum_{j=1}^{N_{n,k}} \xi_{n,k,j} = \begin{cases} 
  -(2q-3) & \qquad (r=0,\, k=2)\,,\\ %phantom{\biggl|} \\
  -(q-1) & \qquad (r=0,\, k\ge3)\,,\phantom{\biggl|} \\
  %\displaystyle -\frac{(r+1)(q-r)}{r} &\qquad (r\in[1,k])     %% forgot the period
  \displaystyle -\frac{(r+1)(q-r)}{r} &\qquad (r\in[1,k])  \,.
\end{cases}
\end{equation}
From eqs.~\eqref{eq:low_neg} and \eqref{eq:high_term_neg}, the product of the roots is 
\begin{equation}
\label{eq:prod_neg}  
\prod_{j=1}^{N_{n,k}} \xi_{n,k,j} 
  = \begin{cases} (-1)^{N_{n,k}}  &\qquad (r=0) \,,\phantom{\biggl|} \\
    \displaystyle (-1)^{N_{n,k}}\frac{q}{r} &\qquad (r\in[1,k]) \,.
\end{cases}
\end{equation}

%\newpage
%\section{Conclusion}\label{sec:conc}

%\newpage
%\section*{Acknowledgement}

\newpage
\bibliographystyle{amsplain}

\begin{thebibliography}{10}
\bibitem{Arolkar}
S.~Arolkar,
On the derivatives of B-Tribonacci polynomials,
\textit{Notes on Number Theory and Discrete Mathematics}, \textbf{28} (2022), 491--499.  

\bibitem{Arslan_Uslu}
B.~Arslan and K.~Uslu,
A polynomial sequence generalizing an integer sequence associated with Tribonacci numbers,
\textit{European Journal of Science and Technology}, \textbf{36} (2022), 185--190.

\bibitem{GarciaGomezLuca2020}
J.~Garc{\'i}a, C.~A.~G{\'o}mez and F.~Luca,
On the zero-multiplicity of the $k$-generalized Fibonacci sequence,
\textit{Journal of Difference Equations and Applications}, \textbf{26} (2020), 1564--1578.

\bibitem{GarciaGomezLuca2024}
J.~Garc{\'i}a, C.~A.~G{\'o}mez and F.~Luca,
Solving Skolem's problem for the $k$-generalized Fibonacci sequence with negative indices,
\textit{Journal of Number Theory}, \textbf{257} (2024), 273--299.

\bibitem{Gupta_etal}
Y.~K.~Gupta, V.~H.~Badshah, M.~Singh, and K.~Sisodiya,
Some identities of Tribonacci polynomials,
\textit{Turkish Journal of Analysis and Number Theory}, \textbf{4} (2016), 20--22.  

\bibitem{HeSimonRicci_num}
M.~X.~He, D.~Simon, and P.~E.~Ricci,
Numerical results on the zeros of the generalized Fibonacci polynomials,
\textit{Calcolo}, \textbf{34} (1997), 25--40.

\bibitem{HeSimonRicci_zeros}
M.~X.~He, D.~Simon, and P.~E.~Ricci,
Dynamics of the zeros of Fibonacci polynomials, 
\textit{Fibonacci Quart.}, \textbf{35} (1997), 160--168.

\bibitem{HoggattBicknell_roots}
V.~E.~Hoggatt, Jr.~and M.~Bicknell,
Roots of Fibonacci polynomials,
\textit{Fibonacci Quart.}, \textbf{11} (1973), 271--274.

\bibitem{HoggattBicknell_FibGen}
V.~E.~Hoggatt, Jr.~and M.~Bicknell,
Generalized Fibonacci polynomials,
\textit{Fibonacci Quart.}, \textbf{11} (1973), 457--465.

\bibitem{Kaymak}
O.~O.~Kaymak,
Some remarks on the zeros of Tribonacci polynomials,
\textit{International Journal of Analysis and Applications}, \textbf{16} (2018), 368--373.

\bibitem{Kaymak_Ozgur}
O.~O.~Kaymak and N.~{\"O}zg{\"u}r,
On the zeros of $R$-Bonacci polynomials and their derivatives,
\textit{Communications Faculty of Sciences University of Ankara Series A1: Mathematics and Statistics}, \textbf{71} (2022), 978--992.

\bibitem{Koshy}
T.~Koshy, 
\textit{Fibonacci and Lucas Numbers with Applications}, 
Wiley, New York (2001).

\bibitem{Kuhapatanakul_Sukruan}
K.~Kuhapatanakul and L.~Sukruan,
The generalized Tribonacci numbers with negative subscripts,
\textit{Integers}, \textbf{14} (2014), A32.

\bibitem{Ricci}
P.~E.~Ricci,
A note on $\mathcal{Q}$-matrices and higher order Fibonacci polynomials,
\textit{Notes on Number Theory and Discrete Mathematics}, \textbf{27} (2021), 91--100.

\bibitem{Skolem1933}
T.~Skolem,
Einige s{\"a}tze {\"u}ber gewisse Reihenentwicklungen und exponentiale Beziehungen mit Anwendung auf diophantische Gleichungen,
\textit{Oslo Vid.~Akad.~Skrifter}, \textbf{I} (1933) 6 (in German).

\bibitem{Soykan}
Y.~Soykan,
Generalized Tribonacci polynomials,
\textit{Earthline Journal of Mathematical Sciences}, \textbf{13} (2023), 1--120.
  
\bibitem{Tasyurdu_Polat}
Y.~Ta{\c s}yurdu and Y.~E.~Polat,
Tribonacci and Tribonacci-Lucas hybrinomials,
\textit{Journal of Mathematics Research}, \textbf{13} (2021), 32--43.

\bibitem{Waddill}
M.~E.~Waddill,
The Tetranacci sequence and generalizations,
\textit{Fibonacci Quart.}, \textbf{30} (1992), 9--19.
  
\end{thebibliography}

\newpage
\begin{table}[!htb]
\centering
{\begin{tabular}{|r|l|l|l|l|lllllllll}
  \hline
$n$ & \multicolumn{1}{c|}{$T_n(x)$} & \multicolumn{1}{c|}{$Q_n(x)$} & \multicolumn{1}{c|}{$P_n(x)$} & \multicolumn{1}{c|}{$d_{n,k}$} \\ \hline
$0$  &&&& \\
$-1$  &&&& \\ 
$-2$  & $1$      &                & & $(0,-,-)$  \\
$-3$  & $-x$     & $1$            & & $(1,0,-)$  \\
$-4$  &          & $-x$           & $1$ & $(-,1,0)$ \\
$-5$  & $1 +x^3$ &                & $-x$ & $(3,-,1)$ \\
$-6$  & $-2x -x^4$  &                & & $(4,-,-)$    \\
$-7$  & $x^2$  & $1+x^4$        & & $(2,4,-)$    \\
$-8$  & $1 +2x^3 +x^6$  & $-2x -x^5$     & & $(6,5,-)$ \\
$-9$  & $-3x -4x^4 -x^7$  & $x^2$          & $1+x^5$ & $(7,2,5)$ \\
$-10$ & $3x^2 +2x^5$  &                & $-2x-x^6$ & $(5,-,6)$ \\
$-11$ & $\dots$  & $1 +2x^4 +x^8$ & $x^2$ & $(9,8,2)$ \\
$-12$ & $\dots$ & $-3x -4x^5 -x^9$       & & $(10,9,-)$ \\
$-13$ & $\dots$ & $3x^2 +2x^6$ & & $(8,6,-)$ \\
$-14$ & $\dots$ & $-x^3$ & $1 +2x^5 +x^{10}$  & $(12,3,10)$ \\
$-15$ & $\dots$ & $\dots$        & $-3x -4x^6 -x^{11}$ & $(13,12,11)$ \\
$-16$ & $\dots$ & $\dots$        & $3x^2 +2x^7$ & $(11,13,7)$ \\
$-17$ & $\dots$ & $\dots$        & $-x^3$ & $(15,10,3)$ \\
$-18$ & $\dots$ & $\dots$        & & $(16,7,-)$ \\
$-19$ & $\dots$ & $\dots$        & $1 +3x^5 +3x^{10} +x^{15}$ & $(14,16,15)$ \\
$-20$ & $\dots$ & $\dots$        & $-4x -9x^6 -6x^{11} -x^{16}$ & $(18,17,16)$ \\
$-21$ & $\dots$ & $\dots$        & $6x^2 +9x^7 +3x^{12}$ & $(19,14,12)$ \\
$-22$ & $\dots$ & $\dots$        & $-4x^3 -3x^8$ & $(17,11,8)$ \\
$-23$ & $\dots$ & $\dots$        & $x^4$ & $(21,20,4)$  \\ \hline
\end{tabular}}
\caption{\label{tb:Fib_poly_neg} Tabulation of the Tribonacci, Quadranacci, and Pentanacci polynomials ($T_n(x)$, $Q_n(x)$, and $P_n(x)$, respectively) for indices $n\le0$.
  Blanks denote the polynomial is identically zero.
  Ellipses ``$\dots$'' denote the polynomial does not vanish, but was not displayed, for clarity of the exposition.
  The degree $d_{n,k}$ of the nonvanishing polynomials is also tabulated.}
\end{table}

\newpage
\begin{table}[!htb]
\centering
{\begin{tabular}{|c|lllllllllllll|}
\hline
$m \setminus x^j$ & $1$ & $x$ & $x^2$ & $x^3$ & $x^4$ & $x^5$ & $x^6$ & $x^7$ & $x^8$ & $x^9$ & $x^{10}$ & $x^{11}$ & $x^{12}$ \\ \hline
%\backslashbox{$m$}{$x^j$} & $1$ & $x$ & $x^2$ & $x^3$ & $x^4$ & $x^5$ & $x^6$ & $x^7$ & $x^8$ & $x^9$ & $x^{10}$ & $x^{11}$ & $x^{12}$ \\ \hline
$-3$ & $1^i$  & $-3^j$  & $3^k$  & $-1^\ell$  & $3^{\;}$  & $-9^{\;}$  & $9^{\;}$  & $-3^{\;}$  & $6^{\;}$  & $-18^{\;}$  & $18^{\;}$  & $-6^{\;}$ & $\dots$ \\ 
$-2$ & $1^e$  & $-2^f$  & $1^g$  & $\phantom{-}0^h$  & $2^i$  & $-4^j$  & $2^k$  & $\phantom{-}0^\ell$  & $3^{\;}$  & $-6^{\;}$  & $3^{\;}$  & $\phantom{-}0^{\;}$ & $\dots$ \\
$-1$ & $1^a$  & $-1^b$  & $0^c$  & $\phantom{-}0^d$  & $1^e$  & $-1^f$  & $0^g$  & $\phantom{-}0^h$  & $1^i$  & $-1^j$  & $0^k$  & $\phantom{-}0^\ell$ & $\dots$ \\
$0$ & $1^A$ & $0^X$ & $0^Y$ & $0^Z$ & $0^a$ & $0^b$ & $0^c$ & $\phantom{-}0^d$ & $0^e$ & $\phantom{-}0^f$ & $0^g$ & $\phantom{-}0^h$ & $\dots$ \\
$1$ & $1^E$ & $1^D$ & $1^C$ & $1^B$ &&&&&&&&& \\
$2$ & $1$ & $2$ & $3$ & $4$ & $3^E$ & $2^D$ & $1^C$ &&&&&& \\
$3$ & $1$ & $3$ & $6$ & $10$ & $12$ & $12$ & $10$ & $6$ & $3^E$ & $1^D$ &&& \\
$4$ & $1$ & $4$ & $10$ & $20$ & $31$ & $40$ & $44$ & $40$ & $31$ & $20$ & $10$ & $4$ & $1^E$ \\ \hline	
\end{tabular}}
\caption{\label{tb:tbl_k4}
Tabulation of the coefficients in the series expansion of $(1+x+x^2+x^3)^m$ for $-3\le m \le 4$.
A blank denotes the element is zero.
Ellipses ``$\dots$'' indicate the row elements are non-terminating.
Numbers tagged with the same superscript are coefficients in the same Quadranacci polynomial.}
\end{table}

\newpage
\begin{table}[!htb]
\centering
%\resizebox{\textwidth}{!}{
{\begin{tabular}{|r|l|c|l|c|}
  \hline
  $n$ & \multicolumn{1}{c|}{$T_n(x)$} & $\rho_{n,3}$ & \multicolumn{1}{c|}{$Q_n(x)$} & $\rho_{n,4}$ \\ \hline
  $1$ & $1$ & $0$ & $1$ & $0$ \\
  $2$ & $x^2$ & $0$ & $x^3$ & $0$ \\
  $3$ & $x(1+x^3)$ & $1$ & $x^2(1+x^4)$ & $1$ \\
  $4$ & $(1+x^3)^2$ & $2$ & $x(1+x^4)^2$ & $2$ \\
  $5$ & $x^2(3 +3x^3 +x^6)$ & $0$ & $(1+x^4)^3$ & $3$ \\
  $6$ & $x(2 +6x^3 +4x^6 +x^9)$ & $0$ & $x^3(4 +6x^4 +4x^8 +x^{12})$ & $0$ \\
  $7$ & $(1+x^3)(1 + 6x^3 +4x^6 +x^9)$ & $1$ & $x^2(3 +10x^4 +10x^8 +5x^{12} +x^{16})$ & $0$ \\
  $8$ & $x^2 (1+x^3)^2 (6 +4x^3 +x^6)$ & $2$ & $x (1 + x^4) (2 + 10 x^4 + 10 x^8 + 5 x^{12} + x^{16})$ & $1$ \\
  $9$ & && $(1 + x^4)^2 (1 + 10 x^4 + 10 x^8 + 5 x^{12} + x^{16})$ & $2$ \\
  $10$ & && $x^3 (1 + x^4)^3 (10 + 10 x^4 + 5 x^8 + x^{12})$ & $3$ \\ \hline
\end{tabular}}
\caption{\label{tb:Tri_Quad_poly}
  Tabulation of the Tribonacci and Quadranacci polynomials $T_n(x)$ and $Q_n(x)$ for
  $n\in[1,8]$ and $n\in[1,10]$, respectively.
  The corresponding values of $\rho_{n,3}$ and $\rho_{n,4}$ are also tabulated.}
\end{table}

\newpage
\begin{table}[!htb]
  \centering
{\begin{tabular}{|r|l|c|l|c|}
  \hline
  \multicolumn{1}{|c|}{$n$} & \multicolumn{1}{c|}{$T_n(x)$} & $\rho_{n,3}$ & \multicolumn{1}{c|}{$Q_n(x)$} & $\rho_{n,4}$ \\ \hline
  $-10$ & $3x^2 +2x^5$ & $0$ & &  \\
  $-11$ & $1 +2x^3 +3x^6 +x^9$ & $0$ & $(1+x^4)^2$ & $2$ \\
  $-12$ & $-x(1+x^3)^2(4+x^3)$ & $2$ & $-x(1+x^4)(3+x^4)$ & $1$ \\
  $-13$ & $3x^2(1+x^3)(2+x^3)$ & $1$ & $x^2(3+2x^4)$ & $0$ \\
  $-14$ & $1 +3x^6 +4x^9 +x^{12}$ & $0$ & $-x^3$ & $0$ \\
  $-15$ & $-x(5 +15x^3 +18x^6 +8x^9 +x^{12})$ & $0$ & $(1+x^4)^3$ & $3$ \\
  $-16$ & $2x^2(1+x^3)^2(5+2x^3)$ & $2$ & $-x(1+x^4)^2(4+x^4)$ & $2$ \\
  $-17$ & $(1+x^3)(1-x^3) (1-5x^3+5x^6+x^9)$ & $1$ & $3x^2(1+x^4)(2+x^4)$ & $1$ \\
  $-18$ & $-x(6+20x^3+36x^6+30x^9-10x^{12}-x^{15})$ & $0$ & $-x^3(4+3x^4)$ & $0$ \\
  $-19$ & $x^2(15 +49x^3 +60x^6 +30x^9 +5x^{12})$ & $0$ & $1 +5x^4 +6x^8 +4x^{12} +x^{16}$ & $0$ \\ \hline
\end{tabular}}
\caption{\label{tb:Tri_Quad_poly_neg}
  Tabulation of the Tribonacci and Quadranacci polynomials $T_n(x)$ and $Q_n(x)$ for
  $-10\ge n\ge -19$.
  The polynomial $Q_{-10}(x)$ vanishes identically.
  The corresponding values of $\rho_{n,3}$ and $\rho_{n,4}$ are also tabulated.}
\end{table}

\newpage
\begin{table}[!htb]
  \centering
\begin{tabular}{|r|l|l|l|l|l|}
\hline  
$k$ & $\hat{\zeta}_k$ \\ \hline
$2$ & $4$ \\
$4$ & $3.06329$ \\
$6$ & $2.74043$ \\
$8$ & $2.57444$ \\
$10$ & $2.47245$ \\
$12$ & $2.40302$ \\
$14$ & $2.35251$ \\
$16$ & $2.31399$ \\
$18$ & $2.28358$ \\
$20$ & $2.25891$ \\ \hline
\end{tabular}
\caption{\label{tb:hat_zeta_k}
  Numerical estimate for $\hat{\zeta}_k$, the supremum of the maximum amplitude of the $k^{th}$ power of the nonzero roots of the $k$-Fibonacci polynomial $\mathcal{F}_{n,k}(x)$, for even $k$ and $n>0$.
The value $\hat{\zeta}_2=4$ is exact.}
\end{table}

\newpage
\begin{figure}[htb]
\centering
\includegraphics[width=0.99\textwidth]{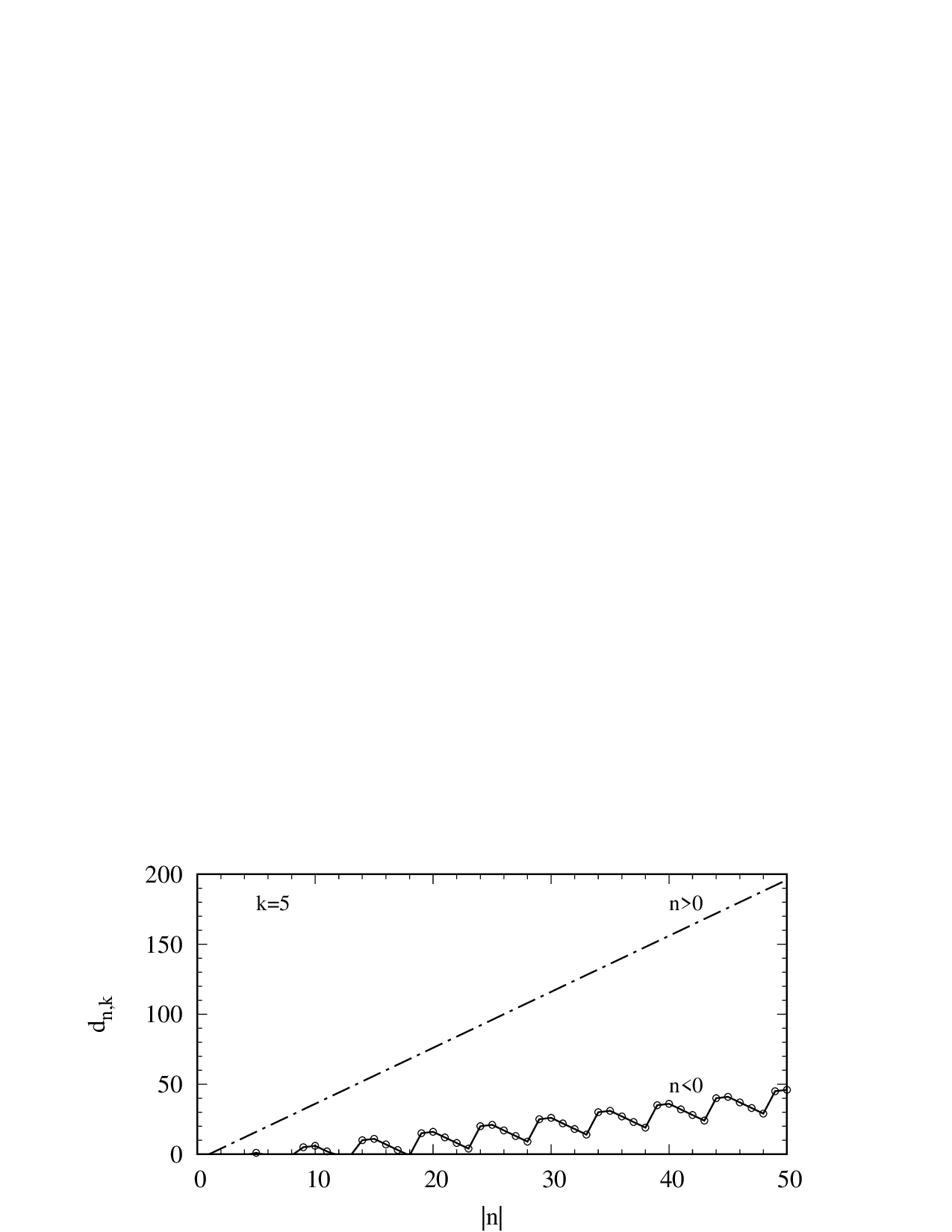}
\caption{\label{fig:deg} Plot of the degree $d_{n,k}$ of a polynomial for $k=5$ and $n>0$ (dotdash line) and $n<0$ (solid line and circles).
  The value of $|n|$ is plotted on the horizontal axis.
  The blanks for $n<0$ occur when a polynomial vanishes identically.}
\end{figure}

\newpage
\begin{figure}[!htb]
\centering
\includegraphics[width=0.99\textwidth]{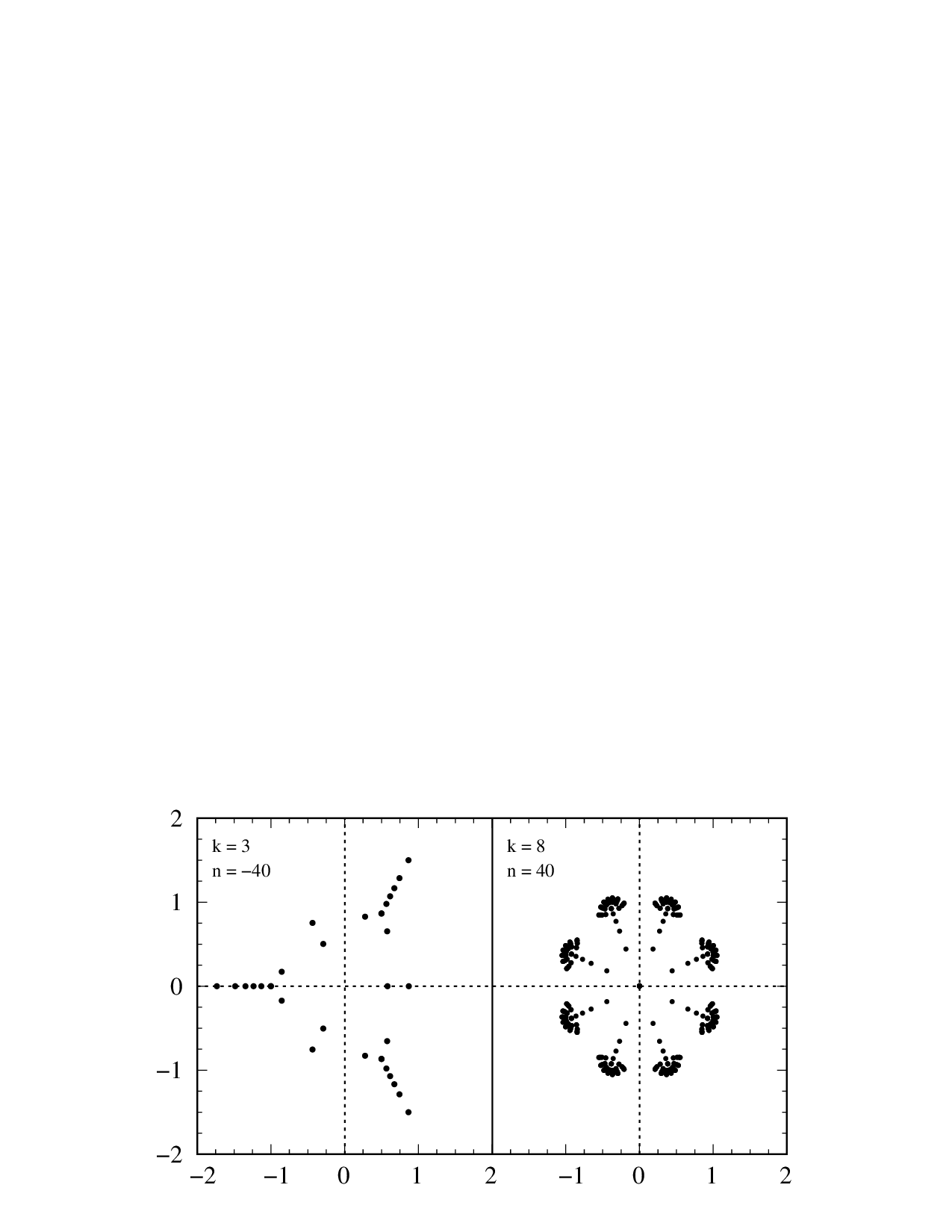}
\caption{\label{fig:roots_poly} Argand diagram plots of the roots of the polynomials
  $\mathcal{F}_{-40,3}(x)$ and
  $\mathcal{F}_{40,8}(x)$.}
\end{figure}

\newpage
\begin{figure}[!htb]
\includegraphics[width=0.99\textwidth]{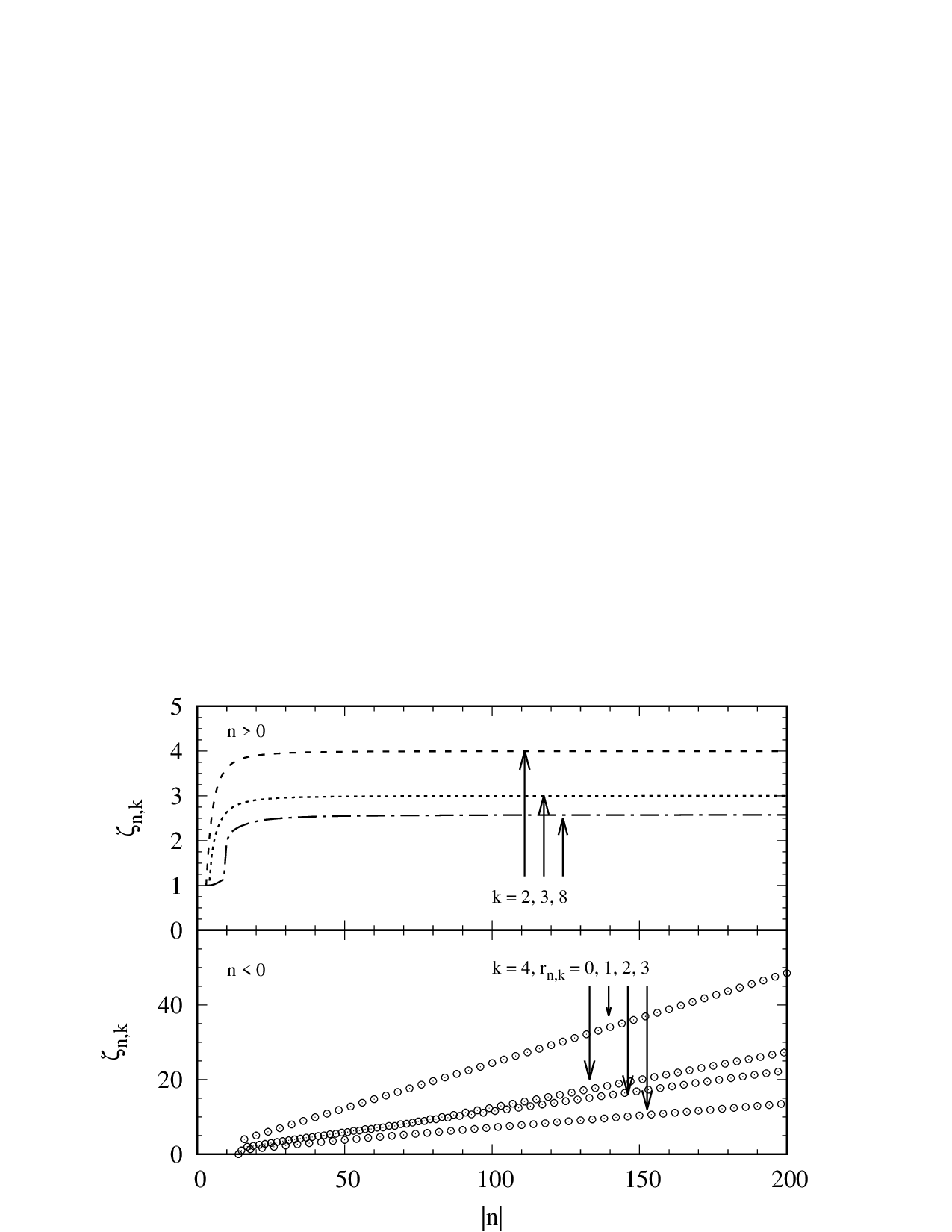}
\caption{\label{fig:upbound} Upper panel: plot of $\zeta_{n,k}$ for $n>0$ and $k=2$, $k=3$, and $k=8$.
  Lower panel: plot of $\zeta_{n,k}$ for $n<0$ and $k=4$, where the curves are indexed by the remainder $r_{n,k}$.}
\end{figure}

\end{document}